\documentclass[12pt]{article} %{amsart}
\usepackage{amsmath}
\usepackage{amssymb}
\usepackage{amsthm}
\usepackage[top=1in, bottom=1in, left=1in, right=1in]{geometry}
\usepackage{xcolor}
\usepackage{hyperref}

\newtheorem{theorem}{Theorem}[section]
\newtheorem{lemma}[theorem]{Lemma}
\newtheorem{prop}[theorem]{Proposition}
\newtheorem{coro}[theorem]{Corollary}

\newtheorem{remark}[theorem]{Remark}

\newcommand{\ddbar}{\partial\bar\partial}

\renewcommand{\o}{\omega}

\title{Optimal asymptotic of the $J$ functional with respect to the $d_1$ metric}
\author{Tam\'as Darvas, Erin George and Kevin Smith}
\date{}
\begin{document}
\maketitle
\begin{abstract} We obtain sharp inequalities between the large scale asymptotic of the $J$ functional with respect to the $d_1$ metric on the space of K\"ahler metrics. Applications regarding the initial value problem for geodesic rays are presented.
\end{abstract}

\section{Introduction and main results}

 Let $(X,\omega)$ be a K\"ahler manifold. We consider the space of K\"ahler metrics $\tilde \omega$ that are cohomologous to $\omega$:
$$\mathcal H := \{\tilde \omega \textup{ K\"ahler on } X \textup{ and } [\tilde \omega]_{dR} = [\omega]_{dR}\}$$

By the $\ddbar$-lemma of Hodge theory, for all $\tilde \omega \in \mathcal H$ there exits $u \in C^\infty(X)$, unique up to a constant, such that $\tilde \omega = \omega_u:= \omega + i\ddbar u.$ Consequently,  instead of looking at $\mathcal H$ directly, it makes sense to work with the space of \emph{K\"ahler potentials} instead:
$$\mathcal H_\omega :=\{ u \in C^\infty(X) \textup{ s.t. } \omega + i\ddbar u >0 \}.$$
Clearly $\mathcal H_\omega \subset \textup{PSH}(X,\omega)$, hence all K\"ahler potentials are $\o$-plurisubharmonic ($\omega$-psh). For a comprehensive treatment of $\omega$-psh functions we refer to the recent book \cite{GZ16}.

For a quick introduction to the basics of pluripotential theory in the context of K\"ahler geometry we refer to \cite[Appendix A.1]{Da17}, \cite[Section 2]{Bl13} or \cite[Chapter 8]{GZ16}.

A main direction of research is to find K\"ahler structures such that $\mathcal H_\omega$ admits constant scalar curvature K\"ahler (csck) metrics. Such metrics are minimizers of Mabuchi's K-energy functional  $\mathcal K: \mathcal H_\omega \to \Bbb R$:
$$\mathcal K(u):=\frac{1}{V} \int_X [\log\Big(\frac{\o_u^n}{\o^n}\Big)\o_u^n - {u}\sum_{j=0}^{n-1}\text{Ric }\o\wedge\o_u^j \wedge \o^{n-j-1}] + \bar S I(u),$$
where $V = \int_X \omega^n$ is the total volume, $\text{Ric }\o$ is the Ricci curvature of the background metric $\omega$, and $\bar S = \frac{1}{V}\int_X S_\omega \omega^n$ is the average scalar curvature of $\o$, which is also an invariant of the class $\mathcal H_\o$ (see \cite[eq. (4.40)]{Da17}). Lastly, $I:\mathcal H_\o \to \Bbb R$ is the \emph{Monge--Amp\`ere energy} (sometimes called Aubin-Yau energy), one of the most basic functionals of K\"ahler geometry:
$$I(u):= \frac{1}{(n+1)V}\sum_{j=0}^n \int_X u \o^j \wedge \o_{u}^{n-j}.$$
For a more detailed analysis of $I$ we refer to \cite[page 111]{Bl13} and \cite[Section 3.7]{Da19}. Closely related to $I$, the $J$ functional $J: \mathcal H_\omega \to \Bbb R$ is defined as follows:
$$J(u) = \frac{1}{V}\int_X u \o^n - I(u).$$
Using Stokes theorem again, it can be showed that $J(u) \geq 0$, and in many ways $J$ acts as a norm-like expression on $\mathcal H_\o$. This aspect will be featured prominently in this work.

By definition, the space of K\"ahler potentials $\mathcal H_\o$ is a convex open subset of $C^\infty(X)$, hence one can think of it as a trivial ``Fr\'echet manifold". As such, one can introduce on $\mathcal H_\o$ an $L^1$ type Finsler metric with relevant underlying geometry \cite{Da15}. If $u \in \mathcal H_\o$ and $\xi \in T_u \mathcal H_\o \simeq C^\infty(X)$, then the $L^1$-length of $\xi$ is given by the following expression:
\begin{equation}\label{eq: Lp_metric_def}
\| \xi\|_{u} = \frac{1}{V}\int_X |\xi| \o_u^n.
\end{equation}
The corresponding $L^2$ type metric recovers the Riemannian geometry of Mabuchi \cite{Ma87} (independently discovered by Semmes \cite{Se92} and Donaldson \cite{Do99}, studied later by Chen \cite{Ch00}). For more details we refer to \cite[Chapter 3]{Da17}.

To the Finsler metric in \eqref{eq: Lp_metric_def} one associates a path length pseudo-metric $d_1(\cdot,\cdot)$. As proved in \cite{Da15}, $d_1$ is actually a metric and $(\mathcal H_\omega,d_1)$ is a geodesic metric space, whose abstract completion can be identified with $(
\mathcal E^,d_1)$, where $\mathcal E^1 \subset \textup{PSH}(X,\omega)$ is a space of potentials introduced by Guedj--Zeriahi \cite{GZ07}, with connections to earlier work of Cegrell.

Let us assume that momentarily that $(X,J)$ does not admit global holomorphic vectorfieds.  Tian conjectured that existence of csck metrics in $\mathcal H_\omega$ is equivalent to $J$-properness of the K-energy functional \cite{Ti97,Ti00}:
\begin{equation}\label{eq: Tian_est}
\mathcal K(u) \geq C_J J(u) + D_J, \ \ u \in \mathcal H_\omega \cap I^{-1}(0),
\end{equation}  
where $C_J,D_J$ are some positive contants dependent only on $(X,\omega)$. Necessity of \eqref{eq: Tian_est} was pointed out in \cite{BDL2}, building on techniques of \cite{DR15}. Chen and Cheng proved that existence of csck metrics in $\mathcal H_\omega$ is equivalent to
\begin{equation}\label{eq: C_est}
\mathcal K(u) \geq C_d d_1(0,u) + D_d, \ \ u \in \mathcal H_\omega \cap I^{-1}(0), 
\end{equation}
where $C_d,D_d$ are some positive contants dependent on $(X,\omega)$ \cite{CC2}. As pointed out in \cite{Da15, DR15}, this last estimate is equivalent to  \eqref{eq: Tian_est}, since there exists  constants $m,M,D>0$ such that
\begin{equation}\label{eq: d_J_comparison}
m J(u) - D \leq d_1(0,u) \leq M J(u) + D, u \in \mathcal H_\omega \cap I^{-1}(0).
\end{equation}
By the analysis of \cite{Da15} one can choose $M =2$ and $m = 2^{-2n-6}$ in the above inequality (see Proposition \ref{prop: d_1_growth_J} below). Since the $C_J$ and $C_{d}$ are linked to the uniform version of K-stability \cite{BBJ}, it interesting to know what the optimal conversion rate is between these two constants. For this the optimal values of $m$ and $M$ need to be found in \eqref{eq: d_J_comparison}, and this is what we investigate in this paper, first on toric K\"ahler manifolds $(X,\omega)$:

\begin{theorem}\label{thm: main_thm} Let $(X,\omega)$ be a toric K\"ahler manifold. Then there exists a constant $D>0$ such that 
\begin{equation}\label{eq: main_thm}
\frac{2}{n+1}\cdot{\left(\frac{n}{n+1}\right)}^n J(u)  - D \leq d_1(0,u)\leq 2 J(u) + D, \ \ u \in \mathcal H^T_\omega \cap I^{-1}(0),
\end{equation}
and the constants multiplying $J(u)$ are sharp.
\end{theorem}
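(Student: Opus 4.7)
The plan is to use $T$-invariance to pass from the analytic inequality on $\mathcal H^T_\omega$ to a sharp convex-analytic inequality on the moment polytope $P$, and then identify the extremal family via an explicit one-variable optimization. The upper bound $d_1(0,u) \le 2J(u) + D$ is not specific to the toric setting and is supplied by Proposition~\ref{prop: d_1_growth_J}; the substantive content is the improved lower bound together with the two sharpness claims.

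By $T$-invariance, each $u \in \mathcal H^T_\omega$ corresponds to a smooth strictly convex function $\phi$ on $\mathbb R^n$ whose gradient image is (after translation) the Delzant polytope $P$, and standard toric formulas express $I(u)$, $J(u)$ and $d_1(0,u)$ as integrals of $\phi^*$ (the Legendre transform of $\phi$, a convex function on $P$) and of the background symplectic potential $\phi_0^*$. Using affine invariance of $\phi^*$ one obtains
$$d_1(0,u) = \inf_{\ell\text{ affine}} \frac{1}{|P|}\int_P |\phi^* - \phi_0^* - \ell|\,dy + O(1),$$
while under $I(u)=0$ one has $J(u) = \tfrac{1}{V}\int_X u\,\omega^n$, which is controlled by $\max_P(\phi_0^* - \phi^*)$ modulo an additive constant. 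Writing $\psi := \phi^* - \phi_0^*$ and $\bar\psi = \tfrac{1}{|P|}\int_P \psi\,dy$, the theorem reduces to a sharp inequality between the $L^\infty$-deviation $\max_P \psi - \bar\psi$ and the centered $L^1$-deviation $\inf_\ell \tfrac{1}{|P|}\int_P |\psi - \bar\psi - \ell|\,dy$ for convex $\psi$ on $P$.

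The optimal constant is found by testing against a one-parameter family of cone-type potentials: fix a vertex $v$ of $P$, cut off a simplicial cap of relative volume $s \in (0,1)$, and let $\psi_s$ be the convex function equal to $0$ outside the cap and rising linearly to $\max \psi_s \sim s^{1/n}$ at $v$. A direct calculation yields $\max\psi_s - \bar\psi_s \sim s^{1/n}$ and the $L^1$-deviation $\sim \tfrac{2}{n+1}\,s^{1/n}(1-s)$, so the ratio $d_1/J$ on this family becomes proportional to $\tfrac{2}{n+1}\,t(1-t)^n$ after the substitution $s = 1-t$. Since $\max_{t\in[0,1]} t(1-t)^n = \tfrac{1}{n+1}\bigl(\tfrac{n}{n+1}\bigr)^n$ (attained at $t=1/(n+1)$), the extremal ratio is exactly $\tfrac{2}{n+1}\bigl(\tfrac{n}{n+1}\bigr)^n$; the degenerate limit $s\to 0$ of the same family sends $d_1/J \to 2$, establishing sharpness of the upper bound as well. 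The matching inequality for arbitrary convex $\psi$ would then be proved by a level-set slicing argument reducing a general $\psi$ to a superposition of such cones.

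The step I anticipate being the main obstacle is precisely this last rearrangement: rigorously reducing the extremality question for arbitrary convex $\psi$ on an \emph{arbitrary} Delzant polytope to the one-parameter simplicial-cone family (which naturally lives on a simplex rather than on $P$), together with the compatible smoothing procedure that promotes the limiting merely-convex extremizers back into $\mathcal H^T_\omega \cap I^{-1}(0)$ without perturbing the limiting ratio. Handling the prescribed boundary singularity of symplectic potentials on $\partial P$ along the way will require extra care to keep the $I$-normalization consistent under the reduction.
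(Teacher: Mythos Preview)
Your broad outline is on the right track: one does reduce via the Legendre transform to a sharp inequality for convex functions on the polytope $P$, and the upper bound really is just Proposition~\ref{prop: d_1_growth_J}. But the convex-analytic content is misidentified in several places, and your test family does not produce the sharp lower constant.

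First, the dictionary. By Theorem~\ref{thm: d_1_toric} one has exactly $d_1(0,u)=\tfrac{1}{\mu(P)}\int_P|\phi_u-\phi_0|\,d\mu$; there is no infimum over affine $\ell$, and adding a genuine linear function to $\phi_u$ changes $u$ by a torus translation, not by a constant. Since $\phi_0\in L^\infty(P)$ this differs from $\tfrac{1}{\mu(P)}\int_P|\phi_u|$ by a bounded amount, while \eqref{eq: Aubin_Yau_toric} gives $I(u)=0\Leftrightarrow \int_P\phi_u=\int_P\phi_0=O(1)$ and Proposition~\ref{prop: J_toric} gives $J(u)=-\inf_P\phi_u+O(1)$. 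So the correct reduced problem is: for \emph{convex} $\phi$ on $P$ with $\int_P\phi=0$, compare $\tfrac{1}{\mu(P)}\int_P|\phi|$ with $-\inf_P\phi$. Your quantity $\max_P\psi-\bar\psi$ is the wrong one: for a convex function the relevant extremum is the infimum, and your $\psi=\phi_u-\phi_0$ need not be convex in any case.

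Second, your cone family is internally inconsistent and targets the wrong endpoint. A convex function that is $0$ outside a cap and rises to its maximum at a vertex has $\inf_P\psi_s=0$, so in the corrected dictionary $J$ would vanish; if one flips the sign so that the function dips down at the vertex, it is no longer convex. Moreover, with the formula you propose, $s\to 0$ corresponds to $t\to 1$ and $t(1-t)^n\to 0$, not to $d_1/J\to 2$; and you are \emph{maximizing} a ratio in order to argue that a \emph{lower} bound is sharp, which is backwards. The actual lower-bound extremizer (Proposition~\ref{prop:main_theo_upper_2}) is not a localized spike but a global affine function $\phi(x)=-1+x_1+\cdots+x_n$ on a specific simplex; the upper-bound witnesses (Proposition~\ref{prop:main_theo_lower}) are spikes, but spikes \emph{upward} near a face, with $\inf_P\phi=-1$ fixed and $\sup_P\phi\to\infty$.

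Finally, the inequality itself (Theorem~\ref{thm: thm_convex}) is not obtained by rearranging into cones. The key step is the sublevel-set scaling estimate: normalizing $\inf_P\phi=-1$, convexity gives $\mu(\{\phi\le b\})\le\bigl(\tfrac{1+b}{1+a}\bigr)^n\mu(\{\phi\le a\})$ for $-1<a<b$, and two layer-cake integrations (over $P_-$ and over $P\setminus P_x$) followed by a one-variable optimization in the auxiliary parameter $b$ yield the constant $\tfrac{2}{n+1}\bigl(\tfrac{n}{n+1}\bigr)^n$. Your anticipated difficulties about boundary singularities of symplectic potentials and smoothing do not arise, because $\phi_0\in L^\infty(P)$ is absorbed into the additive constant $D$ and the convex inequality holds for arbitrary $L^1$ convex $\phi$.
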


By the above result, at least in the toric case, the optimal constant $m$ in \eqref{eq: d_J_comparison} has linear decay with respect to $\dim X$, and not exponential, as previously thought. Surprisingly, neither of the optimal constants depend on the choice of K\"ahler metric $\omega$.

Similar flavour results were recently obtained by Sjöstr\"om Dyrefelt, who proved optimal inequalities between the asymptotics of other ``$J$-type" functionals, when the so called $J$-equation has no solution \cite[Theorem 1.2]{SZ19}. As pointed out in this latter work, inequalities like the ones in \eqref{eq: main_thm} often give criteria existence for canonical metrics, something we hope to investigate in the future.

Using Legendre transforms the inequality in the above result can be transformed into a pair of sharp integral inequalities involving convex functions defined on a convex domain, that we now present:

 \begin{theorem}\label{thm: main_thm_convex} Let $P \subset \Bbb R^n$ be a bounded open convex set. Then for any convex $\phi \in L^1(P)$, satisfying $\int_P \phi= 0$, the following sharp inequality holds:  
\begin{equation}\label{eq: main_thm_convex} 
 -\frac{2}{n+1}\cdot{\left(\frac{n}{n+1}\right)}^n\inf_P \phi \leq \frac{1}{\mu(P)}\int_P |\phi| d\mu\leq -2\inf_P \phi,
 \end{equation}
where  all integrations are in terms of the Lebesque measure.
\end{theorem}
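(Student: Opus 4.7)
The right-hand inequality is elementary; the main work is the left one, which I would prove by reducing it via Brunn--Minkowski to a one-dimensional inequality for concave functions, and then running a short case analysis.

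\emph{Upper bound.} Set $m := -\inf_P \phi \geq 0$, so $\phi \geq -m$. Since $\int_P \phi = 0$, the positive and negative parts balance, and hence
\[
\int_P |\phi|\,d\mu \;=\; -2\int_P \phi^-\,d\mu \;=\; 2\int_{\{\phi<0\}}(-\phi)\,d\mu \;\leq\; 2m\,\mu(\{\phi<0\}) \;\leq\; 2m\,\mu(P).
\]
Sharpness is obtained by concentrating the positive part of $\phi$ in an arbitrarily small region near a boundary point; already in dimension one, the family $\phi_k(x) = -m + m(k+1)x^k$ on $(0,1)$ has $\int\phi_k = 0$, $\inf\phi_k = -m$, and $\int|\phi_k|/\mu(P) \to 2m$ as $k\to\infty$.

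\emph{Reduction of the lower bound to one dimension.} Let $M := \sup_P \phi$ and, for $s \in [-m, M]$, set $F(s) := \mu(\{\phi \leq s\})$. Each sub-level set is convex, so the Brunn--Minkowski inequality implies that $H(s) := F(s)^{1/n}$ is concave and non-decreasing in $s$, with $H(-m) = 0$ and $H(M) = \mu(P)^{1/n}$. A layer-cake plus integration-by-parts computation in $s$ yields
\[
\int_P \phi\,d\mu \;=\; M\mu(P) - \int_{-m}^M F(s)\,ds, \qquad \int_P |\phi|\,d\mu \;=\; 2\int_{-m}^0 F(s)\,ds,
\]
the second using $\int_P\phi = 0$. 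Thus the left inequality in \eqref{eq: main_thm_convex} becomes $\int_{-m}^0 H^n\,ds \geq \frac{n^n}{(n+1)^{n+1}}\, m\,\mu(P)$, subject to $H$ concave non-decreasing on $[-m,M]$ with those endpoint values and $\int_{-m}^M H^n\,ds = M\mu(P)$. Rescaling so that $m+M=1$ and $H(M)=1$, and writing $\alpha := m/(m+M)$, this is equivalent to the following one-dimensional statement: for any concave non-decreasing $h:[0,1]\to[0,1]$ with $h(0)=0$, $h(1)=1$ and $\int_0^1 h^n = 1-\alpha$, one has $\int_0^\alpha h^n \geq \alpha n^n/(n+1)^{n+1}$ (note $\alpha \in (0, n/(n+1)]$, since $h(t)\geq t$ by concavity forces $\int_0^1 h^n \geq 1/(n+1)$).

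\emph{The one-dimensional lemma.} Put $p := h(\alpha)$; concavity together with $h(1)=1$ forces $p \in [\alpha, 1)$. I extract two complementary bounds from concavity and $h(0)=0$: (a) the chord from below, $h(t) \geq (p/\alpha)t$ on $[0,\alpha]$; and (b) the tangent from above, $h(t) \leq \min((p/\alpha)t, 1)$ on $[\alpha, 1]$ (since the right derivative satisfies $h'(\alpha^+) \leq p/\alpha$, this being at most the secant slope on $[0,\alpha]$). If $p \geq n/(n+1)$, integrating (a) suffices: $\int_0^\alpha h^n \geq \alpha p^n/(n+1) \geq \alpha n^n/(n+1)^{n+1}$. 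If instead $p \leq n/(n+1)$, integrate (b) by splitting $[\alpha,1]$ at $t=\alpha/p$ to obtain $\int_\alpha^1 h^n \leq \alpha(1/p - p^n)/(n+1) + 1 - \alpha/p$; subtracting from $\int_0^1 h^n = 1-\alpha$ yields
\[
\int_0^\alpha h^n \;\geq\; \frac{\alpha}{n+1}\, g(p), \qquad g(p) := \frac{n}{p} - (n+1) + p^n.
\]
A quick check gives $g'(p) = n(p^{n+1}-1)/p^2 < 0$ on $(0,1)$ and $g(n/(n+1)) = n^n/(n+1)^n$, so $g(p) \geq n^n/(n+1)^n$ for $p \leq n/(n+1)$, closing that case. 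Sharpness of the lower bound is witnessed by $P=\{x\in\Bbb R^n: x_i\geq 0,\, \sum x_i \leq 1\}$ and $\phi(x) = \sum x_i - n/(n+1)$, for which equality holds throughout; in the reduction this is the boundary case $h(t)=t$, $p = \alpha = n/(n+1)$ where the two sub-cases meet.

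\emph{Main obstacle.} The 1D reduction is clean once one recognizes the Brunn--Minkowski concavity of $F^{1/n}$. The real content is the lemma, where the delicate point is that neither (a) nor (b) alone covers the whole admissible range of $p$, but together they do, and the two bounds match exactly at the crossover $p = n/(n+1)$ --- which is precisely what pins the constant $n^n/(n+1)^{n+1}$ as sharp.
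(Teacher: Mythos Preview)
Your proof is correct; both inequalities and both sharpness examples check out. The organization, however, differs from the paper's in an interesting way.

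For the lower bound, the paper does not invoke Brunn--Minkowski. After normalizing $\inf_P\phi=-1$, it obtains the sub-level set estimate
\[
\mu(\{\phi\le b\})\le\Big(\tfrac{1+b}{1+a}\Big)^n\mu(\{\phi\le a\}),\qquad -1<a<b,
\]
directly, by scaling the domain from a near-minimum point of $\phi$ and using only convexity of $\phi$. This is precisely the special case of your concavity of $H=F^{1/n}$ obtained by comparing with the endpoint $H(-1)\ge 0$; so the geometric input is the same, but your justification via Brunn--Minkowski is stronger than necessary. From there the paper avoids your abstract 1D reduction: it first records the intermediate bound $\int_P|\phi|\ge\frac{2}{n+1}\mu(P_-)$ (your inequality $\int_0^\alpha h^n\ge \alpha p^n/(n+1)$ in disguise), then bounds $\int_{P_+}\phi$ from below by integrating the sub-level estimate up to the specific cutoff $b=\tfrac1n$, sets $A:=\tfrac12\int_P|\phi|/\mu(P_-)$, rearranges, and uses $A\ge\tfrac{1}{n+1}$ together with monotonicity in $A$. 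Your case split on $p=h(\alpha)$ and the function $g(p)=n/p-(n+1)+p^n$ is a cleaner repackaging of exactly this optimization; the crossover $p=n/(n+1)$ corresponds to the paper's extremal value $A=\tfrac{1}{n+1}$.

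In short: your route is more conceptual (reduce to a transparent 1D lemma), at the cost of citing Brunn--Minkowski; the paper's route is more elementary (only convexity and one clever cutoff $b=1/n$), at the cost of that choice looking somewhat unmotivated. The sharpness witness you give for the lower bound is the same affine-on-a-simplex example as the paper's, up to a harmless rescaling.
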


We prove sharpness of \eqref{eq: main_thm_convex} by providing concrete extremizing potentials, and it would be interesting to characterize all such extremizers. Another interesting question would be to find the optimal constants in \eqref{eq: main_thm_convex} for any fixed $P$. As these questions have potential applications in K\"ahler geometry, we hope to return to them in the future.

Despite its basic nature, we could not find the above inequality in the written convex geometry literature, even after consulting with experts \cite{Mi20}. To be clear, the emphasis here is not on novelty, but rather on the significance of \eqref{eq: main_thm_convex} in the context of K\"ahler geometry.

We conjecture that Theorem \ref{thm: main_thm} holds in case of general K\"ahler manifolds as well.  To provide strong evidence for this, we prove the radial version of this expected result. For this we recall some terminology from \cite{DL18} first. 

Let $\{u_t\} \in \mathcal R^1$ be a geodesic ray $[0,\infty) \ni t \to u_t \in \mathcal E^1$ of $(\mathcal E^1,d_1)$, emanating from $u_0 = 0 \in \mathcal H_\omega$, and normalized by $I(u_t) = 0, \ t \geq 0$. Since the $J$ functional is convex along geodesics, one can define its slope along geodesic rays (informally called the \emph{radial $J$ functional}):
$$J\{u_t\} := \lim_{t \to \infty} \frac{J(u_t)}{t}.$$
The $L^1$ speed of a ray $\{u_t\}_t$ is simply the quantity $d_1(0,u_1)$. We prove the following sharp inequality between the $L^1$ speed and the radial $J$ functional:

\begin{theorem}\label{thm: radial_main_ineq} Suppose that $(X,\omega)$ is a compact K\"ahler manifold and $\{u_t\}_t \in \mathcal R^1$ is an $L^1$ ray. Then the following sharp inequality holds:
\begin{equation}\label{eq: radial_main_ineq}
\frac{2}{n+1}\cdot{\left(\frac{n}{n+1}\right)}^n J\{u_t\} \leq d_1(0,u_1)\leq 2 J\{u_t\}.
\end{equation}
\end{theorem}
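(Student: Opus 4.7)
The proof will proceed by deducing each of the two asymptotic inequalities from an appropriate finite-scale comparison between $d_1(0,\cdot)$ and $J$, combined with the characteristic identity $d_1(0,u_t) = t \cdot d_1(0,u_1)$ of $d_1$-geodesic rays in $(\mathcal E^1,d_1)$.

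For the upper bound, I would apply Proposition \ref{prop: d_1_growth_J} (which gives $d_1(0,u) \leq 2 J(u) + D$ for every $u \in \mathcal E^1 \cap I^{-1}(0)$, with the constant $2$ already sharp at finite scale) to $u = u_t$. Dividing by $t$ and sending $t \to \infty$ yields $d_1(0,u_1) \leq 2 J\{u_t\}$. Sharpness in the radial setting is then inherited from extremizing toric examples of Theorem \ref{thm: main_thm}, which can be lifted to geodesic rays saturating the bound in the limit.

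The lower bound is the main difficulty: the non-sharp estimate $2^{-2n-6} J(u) - D \leq d_1(0,u)$ of Proposition \ref{prop: d_1_growth_J}, when scaled by $1/t$, leaves the constant exponentially far from the sharp value $\tfrac{2}{n+1}(\tfrac{n}{n+1})^n$. To access the sharp constant, I plan to exploit the additional structure specific to $L^1$ geodesic rays: at each $x \in X$, the map $t \mapsto u_t(x)$ is convex, producing a slope function $\phi := \lim_{t \to \infty} u_t/t \in \mathcal E^1$, and the normalization $I(u_t) = 0$ ensures the radial $I$ vanishes as well. Using the asymptotic analysis of \cite{DL18}, one rewrites $J\{u_t\}$ and $d_1(0,u_1)$ as integrals of $\phi$ (respectively $|\phi|$) against a suitable Monge--Amp\`ere-type measure arising from the ray. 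The sharp inequality then becomes an integral estimate to which one would like to apply Theorem \ref{thm: main_thm_convex}.

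The principal obstacle is carrying out this reduction on a general (non-toric) K\"ahler manifold, where no literal polytope is present: the convex-geometric hypothesis of Theorem \ref{thm: main_thm_convex} must be replaced by the fiberwise convexity of $t \mapsto u_t(x)$, combined with a disintegration/push-forward argument against the radial Monge--Amp\`ere measure, and one must verify that this reformulation preserves the sharp $n$-dependent constant coming from Theorem \ref{thm: main_thm_convex}. Once this correspondence is in place, sharpness follows by lifting the extremizers of Theorem \ref{thm: main_thm_convex} to geodesic rays that saturate the radial lower bound asymptotically.
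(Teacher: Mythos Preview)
Your upper-bound argument is fine and close in spirit to the paper's; the paper is slightly more direct, using the identities $d_1(0,u_1)=\frac{1}{V}\int_X|\dot u_0|\,\omega^n$ and $J\{u_t\}=\sup_X\dot u_0$ (Lemmas~\ref{lem: d_1_init} and~\ref{lem: J_rad_formula}) rather than passing through Proposition~\ref{prop: d_1_growth_J}, but your route works.

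For the lower bound there is a genuine gap, because the objects you propose to work with are not the ones that carry the argument. After reducing to $\mathcal R^\infty$ by density, the paper expresses $d_1(0,u_1)$ as $\frac{1}{V}\int_X|\dot u_0|\,\omega^n$ against the \emph{fixed background} measure $\omega^n$, and $J\{u_t\}$ as $\sup_X\dot u_0$ (not as an integral). So the target is an inequality for the initial tangent $\dot u_0$ and the measure $\omega^n$, not for the asymptotic slope $\phi=\lim_{t\to\infty}u_t/t$ against a ray-dependent Monge--Amp\`ere measure. In particular, Theorem~\ref{thm: main_thm_convex} cannot be applied as a black box: $\dot u_0$ is not a convex function on any domain in $\Bbb R^n$, and no disintegration or push-forward reduces the problem to one.

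The missing ingredient is the sublevel-set estimate (Lemma~\ref{lem: rad_legendre_sublevel}):
\[
\int_{\{\dot u_0\ge b\}}\omega^n\ \le\ \Big(\frac{\sup_X\dot u_0-b}{\sup_X\dot u_0-a}\Big)^n\int_{\{\dot u_0\ge a\}}\omega^n,\qquad b\le a\le\sup_X\dot u_0,
\]
which is the exact analogue of the scaling inequality $\mu(P_b)\le\big(\tfrac{1+b}{1+a}\big)^n\mu(P_a)$ in the proof of Theorem~\ref{thm: thm_convex}. It is obtained not from fiberwise convexity of $t\mapsto u_t(x)$ but from the Ross--Witt~Nystr\"om Legendre transform $\hat u_\tau=\inf_{t\ge 0}(u_t-t\tau)$ of the ray, the identity $\int_X\omega_{\hat u_\tau}^n=\int_{\{\dot u_0\ge\tau\}}\omega^n$, and the log-concavity of $\tau\mapsto\big(\int_X\omega_{\hat u_\tau}^n\big)^{1/n}$ from \cite{DDL4,WN17}. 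With this in hand, one reruns the numerical computation of Theorem~\ref{thm: thm_convex} verbatim, replacing $\phi$ and Lebesgue measure by $\dot u_0$ and $\omega^n$; this is where the sharp constant $\frac{2}{n+1}\big(\frac{n}{n+1}\big)^n$ reappears. Your proposal does not identify this mechanism, and the ``fiberwise convexity plus disintegration'' plan you sketch would not produce it.
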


Since rays are constant speed, notice that the middle term in the above inequality could have been replaced by the limit $\lim_{t \to \infty} \frac{d_1(0,u_t)}{t}$. In particular, \eqref{eq: radial_main_ineq} would instantly follow from the conjectured inequality \eqref{eq: main_thm} for general K\"ahler manifolds.

Remarkably, the proof of \eqref{eq: radial_main_ineq} rests on the ideas yielding \eqref{eq: main_thm_convex}, despite the fact that it works for general (non-toric) K\"ahler manifolds. We refer to Section 5 for more details.

Finally, we give an application for Theorem 1.3 regarding the initial value problem for geodesic rays in $\mathcal H_\omega$. In case $\omega$ is real analytic, and one is given a real analytic function $v: X \to \Bbb R$, by an application of the Cauchy-Kovalevskaya theorem, there exists a smooth geodesic $[0,\varepsilon_v) \ni t \to u_t \in \mathcal H_\omega$ such that $u_0 =0$ and $\dot u_0 = v$ \cite{RZ17,RZ12}. Of course, $t \to u_t$ is real analytic too, and it is still not known if such $t \to u_t$ can be extended to a geodesic ray (in the metric sense). In some instances, this can be done, as pointed out in \cite{AT03}. However, as we confirm below, there is plenty of analytic initial data, for which this fails to happen. What is more, we give a general condition that initial data of $L^\infty$ rays $\{u_t\}_t \in \mathcal R^\infty$ (rays from $\mathcal R^1$ with bounded potenials) need to satisfy:
\begin{coro} Suppose that $\{u_t\}_t \in \mathcal R^\infty$ is a geodesic ray, with bounded potentials. Then the initial tangent vector $v:= \lim_{t \to 0} \frac{u_t}{t} \in L^\infty(X)$ satisfies the following sharp inequalities:
\begin{equation}\label{eq: init_val_ineq}
\frac{2}{n+1}\cdot{\left(\frac{n}{n+1}\right)}^n \sup_X  v \leq \int_X |v| \omega^n\leq 2 \sup_X v.
\end{equation}
\end{coro}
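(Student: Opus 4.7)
The plan is to deduce the corollary from Theorem~\ref{thm: radial_main_ineq} applied to the given ray $\{u_t\}$, after rewriting both sides of the radial inequality in terms of $v$. Firstly, the constant-speed property of $d_1$-geodesic rays, together with the identification of the $d_1$-speed at the origin with the Finsler $L^1$-norm of the initial velocity (valid since $\omega_{u_0} = \omega$), yields
$$d_1(0, u_1) = \frac{1}{V}\int_X |v|\,\omega^n.$$
Secondly, affinity of $I$ along geodesics combined with $I(u_t) \equiv 0$ forces $\int_X v\, \omega^n = 0$ via the classical identity $dI_0(\xi) = \frac{1}{V}\int_X \xi\, \omega^n$, and likewise $J(u_t) = \frac{1}{V}\int_X u_t\, \omega^n$. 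Convexity of $t \mapsto u_t(x)$ makes $u_t/t$ non-decreasing; by the $\mathcal{R}^\infty$ hypothesis, the monotone limit $\phi_\infty := \lim_{t\to\infty} u_t/t$ is in $L^\infty(X)$, so $J\{u_t\} = \frac{1}{V}\int_X \phi_\infty\, \omega^n$.

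The upper bound in \eqref{eq: init_val_ineq} is immediate from the mean-zero condition: $\int_X |v|\omega^n = 2\int_X v_+ \omega^n \leq 2 V \sup_X v$, which is the desired bound under the implicit normalization $V = 1$.

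The substantive step is the lower bound. My plan is to show that $\phi_\infty$ is almost everywhere equal to a constant $c \geq \sup_X v$. Since $u_t$ is $\omega$-psh we have $\omega/t + i\ddbar(u_t/t) \geq 0$; passing to the limit $t \to \infty$ with $\omega/t \to 0$ yields $i\ddbar \phi_\infty \geq 0$ distributionally, so the upper semicontinuous regularization $\phi_\infty^*$ is plurisubharmonic on the compact K\"ahler manifold $X$, hence constant by the maximum principle. Moreover $\phi_\infty \geq v$ pointwise (from convexity of $t \mapsto u_t(x)$ and $u_0 = 0$, which gives $u_t \geq tv$), so the constant $c := \phi_\infty^*$ satisfies $c \geq \sup_X v$. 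Therefore $J\{u_t\} = c \geq \sup_X v$, and the lower bound of Theorem~\ref{thm: radial_main_ineq} yields
$$\frac{1}{V}\int_X |v|\,\omega^n = d_1(0, u_1) \geq \frac{2}{n+1}\left(\frac{n}{n+1}\right)^n J\{u_t\} \geq \frac{2}{n+1}\left(\frac{n}{n+1}\right)^n \sup_X v,$$
as required.

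For sharpness, I would transfer extremizing $L^\infty$ rays from Theorem~\ref{thm: radial_main_ineq}: these saturate the radial inequality while forcing $c$ arbitrarily close to $\sup_X v$. The main obstacle is the constancy of $\phi_\infty$, which requires careful handling of the monotone increasing limit and its USC regularization within the pluripotential framework.
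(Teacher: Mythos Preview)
Your argument is correct in outline but takes a different route from the paper. The paper's proof is a two-line citation: it invokes Lemma~\ref{lem: d_1_init} for $d_1(0,u_1)=\int_X|\dot u_0|\,\omega^n$ and Lemma~\ref{lem: J_rad_formula} for the \emph{equality} $J\{u_t\}=\sup_X\dot u_0$ (the latter resting on \cite[Theorem~1]{Da17}, which gives $\sup_X\dot u_0=\sup_X u_1$, together with linearity of $t\mapsto\sup_X u_t$), and then both inequalities of \eqref{eq: init_val_ineq} drop out of Theorem~\ref{thm: radial_main_ineq} at once.

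You proceed differently on both sides. For the upper bound you bypass Theorem~\ref{thm: radial_main_ineq} entirely and use the mean-zero condition directly; this is perfectly valid and in fact more elementary. For the lower bound you do not use the equality $J\{u_t\}=\sup_X v$ but instead prove the weaker inequality $J\{u_t\}\geq\sup_X v$ via a maximum-principle argument: the asymptotic slope $\phi_\infty=\lim_{t\to\infty}u_t/t$ has $i\partial\bar\partial\phi_\infty\geq 0$ in the current sense, hence is a.e.\ constant on the compact manifold $X$, and this constant dominates $v$. This suffices for the lower bound once combined with Theorem~\ref{thm: radial_main_ineq}. The trade-off: your argument is more self-contained (it does not appeal to the result from \cite{Da17}), but the pluripotential step you flag---passing from the distributional inequality on the monotone limit to a genuine psh representative and then to pointwise control of $\sup_X v$---does require care, whereas the paper's citation absorbs all of that. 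For sharpness, the paper simply inherits the toric extremizers from the remark following Theorem~\ref{thm: radial_ineq}; your description of sharpness is vaguer and would benefit from noting that in those toric examples one actually has $J\{u_t\}=\sup_X v$ exactly, not just approximately.
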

Since real analytic functions are dense among smooth ones \cite[Proposition 2.1]{Le19}, one can find plenty of real analytic $v$ for which either inequality in \eqref{eq: init_val_ineq} fails, in particular such $v$ can not be the initial tangent vector for a geodesic ray.

Interestingly, in the above result we can not pinpoint the time where the geodesic segment with (real analytic) initial tangent $v$ can not be continued anymore. 
Related to this, in \cite{Da20} the author has provided  non-smooth geodesic segments that can not be continued at a specific point, however the information there could not be linked to the initial tangent.

\paragraph{Acknowledgments.} The bulk of this project was carried out during the summers of 2017 and 2019, in a project funded by NSF grant DMS-1610202. The first author is currently partially supported by NSF grant DMS-1846942(CAREER).  We thank E. Milman and Y.A. Rubinstein for conversations related to the topic of this paper.

\section{The $L^1$ geometry of the space of K\"ahler potentials}

In this short section we recall basic facts about the path length metric $d_1$ associated to the Finsler metric \eqref{eq: Lp_metric_def}, with focus on the relationship with the $J$ functional. For a survey on this topic, we refer to \cite[Section 3.7]{Da19}

Let us recall the following comparison theorem for the $d_1$ metric \cite[Theorem 3.32]{Da19}:

\begin{theorem}\label{thm: Energy_Metric_Eqv} For any $u_0,u_1 \in \mathcal H_\omega$ we have
\begin{equation}\label{eq: Energy_Metric_Eqv}
d_1(u_0,u_1) \leq \frac{1}{V}\int_X |u_0 - u_1| \o_{u_0}^n + \frac{1}{V}\int_X |u_0 - u_1| \o_{u_1}^n\leq {2^{2n + 6}} d_1(u_0,u_1).
\end{equation}
\end{theorem}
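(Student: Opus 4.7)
I plan to prove the two inequalities separately, exploiting the weak geodesic theory on $\mathcal H_\omega$ from \cite{Da15}. For the upper bound on $d_1$, take the weak $C^{1,1}$ geodesic $[0,1]\ni t\mapsto u_t$ joining $u_0$ and $u_1$ (Chen). Two standard facts are crucial: constant-speed gives $d_1(u_0,u_1) = \tfrac{1}{V}\int_X|\dot u_0|\omega_{u_0}^n = \tfrac{1}{V}\int_X|\dot u_1|\omega_{u_1}^n$, while affinity of $I(u_t)$ yields $\int_X\dot u_0\,\omega_{u_0}^n = \int_X\dot u_1\,\omega_{u_1}^n$. Convexity of $t\mapsto u_t(x)$ produces the pointwise sandwich $\dot u_0 \le u_1-u_0\le \dot u_1$, whence $|\dot u_0|\le |u_1-u_0|+((u_1-u_0)-\dot u_0)$ and $|\dot u_1|\le|u_1-u_0|+(\dot u_1-(u_1-u_0))$. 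Integrating these against $\omega_{u_0}^n$, respectively $\omega_{u_1}^n$, and summing, the cross-terms collapse to $\int_X(u_1-u_0)(\omega_{u_0}^n-\omega_{u_1}^n)$, a nonnegative quantity (by integration by parts and telescoping of $\omega_{u_1}^n-\omega_{u_0}^n$) trivially bounded by $\int_X|u_1-u_0|(\omega_{u_0}^n+\omega_{u_1}^n)$. The stated inequality follows after dividing by $2V$.

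For the reverse inequality $\tfrac{1}{V}\int_X|u_1-u_0|(\omega_{u_0}^n+\omega_{u_1}^n)\le 2^{2n+6}d_1(u_0,u_1)$, I would reduce to the monotone case via the envelope $v := P_\omega(\max(u_0,u_1))\in\mathcal E^1$, which satisfies $u_0,u_1\le v \le \max(u_0,u_1)$ and hence $(v-u_0)+(v-u_1)\le |u_1-u_0|$. The Pythagorean-type decomposition from \cite{Da15} yields
\[d_1(u_0,u_1) = (I(v)-I(u_0))+(I(v)-I(u_1)),\]
and for each monotone pair $u_i\le v$, $I(v)-I(u_i) = \tfrac{1}{(n+1)V}\sum_{j=0}^n\int_X (v-u_i)\,\omega_{u_i}^{n-j}\wedge\omega_v^j$, with all summands nonnegative. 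It then suffices to bound the pure integrals $\int_X(v-u_i)\omega_{u_i}^n$ from above by a dimensional multiple of some mixed term in this expansion. This is carried out through the pluripotential comparison $\omega_{\max(u_0,u_1)}^k\le 2^k(\omega_{u_0}^k+\omega_{u_1}^k)$, iterated inside the binomial expansion, with Bedford--Taylor convergence used to pass from regularized maxima to $v$; each iteration costs at most a factor of $2^n$, and the losses accumulate into the constant $2^{2n+6}$.

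The main obstacle is the lower bound, specifically the pluripotential control of mixed Monge--Amp\`ere currents. Since $v$ is only quasi-psh with bounded potentials (at best in $\mathcal E^1$), one cannot manipulate $\omega_v^j$ smoothly; instead one must approximate by regularized maxima, invoke Bedford--Taylor continuity, and carefully track how non-pluripolar mass migrates onto the contact set $\{v=u_0\}\cup\{v=u_1\}$. The resulting constant $2^{2n+6}$ is an artifact of this iterative scheme and is far from optimal; indeed, the sharp constant $M=2$ recorded in \eqref{eq: d_J_comparison} (together with its companion sharp lower constant) is precisely what the remainder of this paper is devoted to establishing.
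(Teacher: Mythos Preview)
This theorem is not proved in the paper; it is quoted from \cite[Theorem 3.32]{Da19}, so there is no in-paper argument to compare against. On its own merits: your proof of the first inequality is correct---the endpoint speed formulas, convexity of $t\mapsto u_t(x)$, and the sign $\int_X(u_1-u_0)(\omega_{u_0}^n-\omega_{u_1}^n)\ge 0$ combine exactly as you describe.

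The argument for the second inequality, however, is set up with the wrong envelope and cannot be salvaged as written. With $v=P_\omega(\max(u_0,u_1))$ you only obtain $v-u_i\le |u_0-u_1|$ pointwise, a \emph{lower} bound on $|u_0-u_1|$; the target is an \emph{upper} bound $\int_X|u_0-u_1|\,\omega_{u_i}^n\le C\, d_1(u_0,u_1)$, and no mixed--versus--pure comparison inside $I(v)-I(u_i)$ can reverse that direction. (Concretely, at any point with $u_0>u_1$ one has $u_0\le v\le\max(u_0,u_1)=u_0$, hence $v-u_0=0$ while $|u_0-u_1|>0$.) In addition, the Pythagorean identity you invoke is not the one from \cite{Da15}: the formula actually proved there---and recalled here as \eqref{eq: d_1_formula}---uses the \emph{lower} rooftop $w=P(u_0,u_1)\le \min(u_0,u_1)$. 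With $w$ in place of $v$ the inequalities do go the right way: $|u_0-u_1|\le (u_0-w)+(u_1-w)$, each $\int_X(u_i-w)\omega_{u_i}^n$ appears among the nonnegative summands of $(n+1)V\,(I(u_i)-I(w))$, and the genuine decomposition $d_1(u_0,u_1)=I(u_0)+I(u_1)-2I(w)$ closes the loop; this is the route taken in \cite{Da19}. Finally, your closing remark conflates two distinct comparisons: the sharp constants of \eqref{eq: d_J_comparison} concern $d_1$ versus $J$, not $d_1$ versus $\int|u_0-u_1|(\omega_{u_0}^n+\omega_{u_1}^n)$; this paper does not attempt to sharpen the $2^{2n+6}$ in Theorem~\ref{thm: Energy_Metric_Eqv}.
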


We also recall the following concrete formula for the path length metric $d_1$ (\cite[Proposition 3.43]{Da19}):
\begin{equation}\label{eq: d_1_formula}
d_1(u,v) = I(u) + I(v) - 2 I(P(u,v)), \ \ u,v \in \mathcal H_\o,
\end{equation}
where $P(u,v)$ is the following ``rooftop" envelope:
$$P(u,v)= \sup\{w \in \textup{PSH}(X,\omega) \textup{ s.t. } w \leq u \textup{ and } w \leq v \}.$$
More concretely, $P(u,v)$ is the greatest $\o$-psh function that lies below $u$ and $v$. For properties of $P(u,v)$, we refer to \cite[Section 2.4]{Da19}.   

As already suggested by \eqref{eq: d_1_formula}, there is an intimate relationship between the metric $d_1$ and the $I$ functional. 
By inspection, $I(u+c)=I(u) + c$ for any $u \in \mathcal H_\o$ and $c \in \Bbb R$. This allows for the following bijection between metrics and potentials:
$$\mathcal H_\o \cap I^{-1}(0) \simeq \mathcal H.$$
What is more, the hypersurface $\mathcal H_\omega \cap I^{-1}(0)$ is totally geodesic within $\mathcal H_\o$ (see the discussion near \cite[(3.67)]{Da19}). Finally, let us recall \cite[Proposition 3.44]{Da17}, giving the best available asymptotic comparison between the $J$ functional and the $d_1$ metric from the literature:

\begin{prop}\label{prop: d_1_growth_J}
There exists $C=C(X,\o)> 1$ such that
\begin{equation}\label{eq: d_1_growth_J}
 2^{-2n - 6}J(u) -C \leq d_1(0,u) \leq 2 J(u) + C, \ \ u \in \mathcal H_\omega \cap I^{-1}(0).
\end{equation}
\end{prop}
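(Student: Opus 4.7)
The plan is to leverage the rooftop envelope formula \eqref{eq: d_1_formula} for the upper bound and the norm comparison Theorem \ref{thm: Energy_Metric_Eqv} for the lower bound, converting both endpoints into $J(u)$ via elementary manipulations on the affine slice $\mathcal H_\omega \cap I^{-1}(0)$.

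For the upper bound I would substitute $u_0 = 0$ and $u_1 = u$ into \eqref{eq: d_1_formula}; since $I(0) = I(u) = 0$ this collapses to $d_1(0, u) = -2\, I(P(0, u))$. Monotonicity of $I$ together with $I(0) = 0$ forces $\sup_X u \geq 0$ whenever $I(u) = 0$ (otherwise $u<0$ everywhere, which is incompatible with $I(u)=0$), so the shifted potential $u - \sup_X u$ is $\omega$-psh and lies pointwise below both $0$ and $u$; by definition of the envelope this yields $P(0, u) \geq u - \sup_X u$. Combining monotonicity of $I$ with the translation identity $I(v + c) = I(v) + c$ then gives $I(P(0,u)) \geq I(u) - \sup_X u = -\sup_X u$, hence $d_1(0, u) \leq 2\sup_X u$. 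The last input is the classical compactness estimate: there exists $C_0 = C_0(X, \omega) > 0$ such that $\sup_X u \leq \frac{1}{V} \int_X u\,\omega^n + C_0 = J(u) + C_0$ for every $u \in \mathcal H_\omega \cap I^{-1}(0)$, converting $2\sup_X u$ into $2 J(u) + 2 C_0$ as desired.

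For the lower bound I would apply the left inequality of Theorem \ref{thm: Energy_Metric_Eqv} with $u_0 = 0$ and $u_1 = u$, obtaining
\[
d_1(0,u) \;\geq\; 2^{-2n-6} \Bigl( \tfrac{1}{V}\int_X |u|\,\omega^n \;+\; \tfrac{1}{V}\int_X |u|\,\omega_u^n \Bigr).
\]
Discarding the $\omega_u^n$ integral and using Jensen's inequality $\tfrac{1}{V}\int_X|u|\,\omega^n \geq \bigl|\tfrac{1}{V}\int_X u\,\omega^n\bigr|$ together with $\tfrac{1}{V}\int_X u\,\omega^n = J(u) + I(u) = J(u) \geq 0$ yields $d_1(0, u) \geq 2^{-2n-6}\,J(u)$; the $-C$ written in the statement is in fact a harmless additive slack in this direction.

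The main obstacle is the uniform Hörmander-type estimate $\sup_X u \leq J(u) + C_0$ for $u \in \mathcal H_\omega \cap I^{-1}(0)$. It rests on the standard compactness fact that the family of $\omega$-psh functions normalized by $\sup = 0$ is bounded in $L^1(\omega^n)$ by a constant depending only on $(X,\omega)$; applied to $v = u - \sup_X u$, this gives $\sup_X u - \frac{1}{V}\int_X u\,\omega^n \leq C_0$, which is the needed inequality. Once this compactness input is in place, everything else reduces to the direct identities \eqref{eq: d_1_formula} and \eqref{eq: Energy_Metric_Eqv}.
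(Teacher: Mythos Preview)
Your proposal is correct and follows essentially the same route as the paper: the upper bound via the rooftop formula $d_1(0,u)=-2I(P(0,u))$, the comparison $u-\sup_X u\leq P(0,u)$, and the compactness estimate $\sup_X u\leq \tfrac{1}{V}\int_X u\,\omega^n+C_0$ are exactly the paper's steps, and for the lower bound both you and the paper invoke the second inequality of Theorem~\ref{thm: Energy_Metric_Eqv} and then use $J(u)=\tfrac{1}{V}\int_X u\,\omega^n\leq \tfrac{1}{V}\int_X|u|\,\omega^n$. Your observation that the additive $-C$ on the left is unnecessary slack is also accurate.
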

\begin{proof} Let  $u \in \mathcal H_\omega \cap I^{-1}(0)$. By Theorem \ref{thm: Energy_Metric_Eqv} we have 
$$J(u)=\frac{1}{V} \int_X u \o^n \leq \frac{1}{V} \int_X |u| \o^n \leq 2^{2n+6}d_1(0,u),$$
implying the first estimate in \eqref{eq: d_1_growth_J}. For the second estimate, since $I(u)=0$, we have that $\sup_X u \geq 0$ and 
\begin{equation}\label{eq: somethingd_1}
d_1(0,u)=-2 I(P(0,u)).
\end{equation}
Clearly, $u - \sup_X u \leq \min(0,u)$, so 
$u - \sup_X u \leq P(0,u)$. Thus, $-\sup_X u = I(u - \sup_X u) \leq I(P(0,u)).$ 
Combined with \eqref{eq: somethingd_1}, we obtain that $
d_1(0,u)=-2I(P(0,u))\leq 2 \sup_X u.$
Finally, it is well known that  $\sup_X u \leq \frac{1}{V} \int_X u \o^n + C'$ for some $C'(X,\o) > 1$ \cite[Lemma 3.45]{Da19}, finishing the proof.
\end{proof}

\section{Analysis on toric K\"ahler manifolds}

In this short section we point out how the inequalities \eqref{eq: main_thm} and \eqref{eq: main_thm_convex} are related. Much of the material here is based on \cite[Section 6]{DG16} and \cite{ZZ08}, and we invite the reader to consult these works for a more thorough treatment. 

We say that $(X,\omega)$ is a toric K\"ahler manifold of complex dimension $n$ if one can embed $(\Bbb C^*)^n$ into $X$ such that the complement of $(\Bbb C^*)^n$ inside $X$ is Zariski closed. Additionally, we ask that the trivial action of $\Bbb T^n := (S^1)^n$ on $(\Bbb C^*)^n$ extends to $X$, and the K\"ahler form $\omega$ is $\Bbb T^n$-invariant.

Using the fact that $\omega$ is $\Bbb T^n$-invariant we get that
\begin{equation}\label{eq: omega_triv}
\omega = i\ddbar (\psi_0 \circ L) \ \textup{ on } \ (\Bbb C^*)^n,
\end{equation}
where $\psi_0 \in C^\infty(\Bbb R^n)$ and $L(z_1,z_2,\ldots,z_n) = (\log|z_1|,\log|z_2|,\ldots,\log|z_n|) \in \Bbb R^n.$ Since $\psi_0 \circ L$ is psh on $(\Bbb C^*)^n$, it follows that $\psi_0$ has to be strictly convex on $\Bbb R^n$ (see \eqref{eq: Hessian_calc} below), and we may choose $\psi_0(0)=0$.

By $\mathcal H^{T}$ we will denote the metrics $\omega' \in \mathcal H$ that are torus invariant, i.e., $(S^1)^n$ acts by isometries on $\omega'$. The corresponding space of toric potentials will be denoted by $\mathcal H^{T}_\omega$.

Given  $u \in \mathcal H^{T}_\omega$, comparing with \eqref{eq: omega_triv}, we can introduce the following potential 
$$\psi_u := \psi_0 + u \circ E,$$
where $E(x)=E(x_1,x_2,\ldots,x_n) := (e^{x_1},e^{x_2},\ldots,e^{x_n}), \ x \in \Bbb R^n.$  The point here is that $\omega_u = i\ddbar \psi_u \circ L$ on $(\Bbb C^*)^n$.

\paragraph{The Legendre transform.} Given $\omega_u \in \mathcal H_\o^T$, it follows from a result of  Atiyah--Guillemin-- (see \cite[Chapter 27]{CS08}) that  the ``moment
map" $\nabla \psi_u: \Bbb R^n \to \Bbb R^n$ is one-to-one and sends $\Bbb R^n$ to $P := \textup{Im }\nabla \psi_u$, which is a convex bounded polytope, independent of $u$, that can be described in the following manner:
$$P := \{l_j(s)\geq 0, \ 1\leq j \leq d  \} \subset \Bbb R^n,$$
where $l_j(s) = \langle s, v_j\rangle - \lambda_j$ are affine functions that determine the sides of $P$. 

Though we will not use it, by a theorem of Delzant, $P$ satisfies a number of properties (it is {simple}, i.e., there are $n$ edges meeting at each vertex; it is {rational}, i.e., the edges meeting at the vertex $p$ are rational in the sense that each edge is of the form $p + t u_i$, $0 \leq t < \infty$, where $u_i \in \Bbb Z^n$; it is {smooth}, i.e., these $u_1,\ldots,u_n$ can be chosen to be a basis of $\Bbb Z^n$. In fact, such Delzant polytopes $P$ determine toric K\"ahler structures $(X,\omega)$ uniquely (see \cite{Ab00} and \cite[Chapter 28]{CS08}).

Since $\psi_u: \Bbb R^n \to \Bbb R$ is convex, we can take the Legendre transform of $\psi_u$ and obtain another convex function $\phi_u$,  with possible values equal to $+\infty$:
$$\phi_u(s) = \psi_u^*(s):= \sup_{x \in \Bbb R^n}\big(\langle s, x\rangle - \psi_u(x) \big), \ \ s \in \Bbb R^n.$$

Since $\textup{Im }\nabla \psi_u = P$, it follows that $\phi_u(s)$ is finite if and only if $s \in P$. Also,  by the involutive property of Legendre transforms, for all $x \in \Bbb R^n$ we will have
$$\phi^*_u = \psi^{**}_u = \psi_u,$$
$$\phi_u(\nabla \psi_u(x)) = \langle x,\nabla \psi_u(x) \rangle - \psi_u(x) \ \ \textup{ and } \ \  \nabla \psi_u(x) = s \Leftrightarrow \nabla \phi_u(s) = x.$$

Summarizing, the Legendre transform $u \to \psi_u^*=\phi_u$ gives a one-to-one correspondence between elements of $\mathcal H_\omega^T$ and the class $\mathcal C(P)$:
$$\mathcal C(P) := \{f:P \to \Bbb R \textup{ is convex and } f - \phi_0 \in C^\infty(\overline{P})\},$$
where $\phi_0 = \psi_0^*$. In addition, as pointed out in \cite[Proposition 4.5]{Gu14},  there is a one-to-one correspondence between $(S^1)^n$-invariant elements of $\textup{PSH}(X,\omega)\cap L^\infty$ and convex functions $f:P \to \Bbb R$ for which $f - \phi_0$ is only bounded on $P$. In particular, $\phi_0 \in L^\infty(P)$.

\paragraph{The $L^1$ Finsler geometry of toric metrics.} We now describe the $L^1$ geometry of $\mathcal H_\omega$ restricted to $\mathcal H_u^T$, the space of potentials for $\Bbb T^n$-invariant K\"ahler metrics.
 
Let $[0,1]\ni t \to u_t \in \mathcal H_\omega^T$ be a smooth curve  connecting $u_0,u_1\in \mathcal H_\omega^T$. Taking the Legendre transform of the potentials $\psi_{u_t}$ we arrive at
\begin{equation}\label{eq: phi_u_t_def}
\phi_{u_t}(s):=\sup_{x \in \Bbb R^n} \left\{ \langle x,s \rangle -\psi_{u_t}(x) \right\}
=\langle x_t,s \rangle -\psi_{u_t}(x_t), \ s \in \Bbb R^n,
\end{equation}
where $x_t=x_t(s)$ is such that $\nabla \psi_{u_t}(x_t)=s$. Taking derivatives of
this identity with respect to $t$ yields
$$
\nabla^2\psi_{u_t} \cdot  \dot{x}_t =-\nabla \dot{\psi}_{u_t}.
$$
Taking $t$-derivative of \eqref{eq: phi_u_t_def} and using this formula we arrive at
\begin{equation}\label{eq: dot_ident}
\dot{\phi}_{u_t}(s)=-\dot{\psi}_{u_t}(x_t).
\end{equation}

To continue, we observe that
\begin{equation}\label{eq: Hessian_calc}
\omega_{u_t}= \frac{\partial^2 (\psi_{u_t} \circ L)}{\partial z_i \partial \overline{z_j}}= \frac{1}{4}
\frac{1}{z_i \overline{z_j}} \cdot \frac{\partial^2 \psi_{u_t}}{\partial x_i \partial x_j} \circ L 
\; \text{ on } \; (\Bbb C^*)^n.
\end{equation}
Thus on $(\Bbb C^*)^n$ we have
\begin{flalign}\label{eq: CMAE_id}
\omega_{u_t}^n & = (i\ddbar \psi_{u_t} \circ L)^n
=n! \det \left( \frac{\partial^2 (\psi_{u_t} \circ L)}{\partial z_i \partial \overline{z_j}} \right)  i^{n} (d z_1 \wedge d \bar z_1) \wedge \ldots  \wedge (d z_n \wedge d \bar z_n) 
= \nonumber \\
&=\frac{n!}{2^n }\frac{1}{\Pi_j |z_j|^2} \cdot \big( MA_{\Bbb R}(\psi_{u_t}) \circ L \big)  (d x_1 \wedge d y_1) \wedge \ldots  \wedge (d x_n \wedge d y_n),
\end{flalign}
where $MA_{\Bbb R}(\psi_{u_t}) = \det \left( \frac{\partial^2 \psi_{u_t} }{\partial x_i \partial {x_j}} \right)$ denotes the real Monge-Amp\`ere measure of the (smooth) convex function $\psi_{u_t}$. 
As a result, after using polar coordinates in each $\Bbb C^*$ component ($dx \wedge dy = r dr \wedge d \theta$), we conclude that
$$
\int_{X} |\dot u_t| \omega_{u_t}^n = \int_{(\Bbb C^*)^n} |\dot \psi_{u_t} \circ L| (i\ddbar \psi_{u_t} \circ L)^n={\pi}^n n!  \int_{\Bbb R^n} |\dot \psi_{u_t}|^p MA_{\Bbb R}(\psi_{u_t}).
$$

By \eqref{eq: dot_ident}, $\dot{\psi}_{u_t}(x)=-\dot{\phi}_{u_t}(\nabla \psi_{u_t}(x))$. Moreover , $MA_{\Bbb R}(\psi_{u_t})=(\nabla \psi_{u_t})^* d\mu(s)$, where $\mu$ is the Lebesgue measure. Therefore, a change of variables $s = \nabla \psi_{u_t}(x)$ yields
\begin{equation}\label{eq: L^1_isom}
\int_{X} |\dot u_t| \omega_{u_t}^n = \pi^n n! \int_{\Bbb R^n} |\dot \psi_{u_t}| MA_{\Bbb R}(\psi_{u_t})= \pi^n n! \int_P |\dot{\phi}_{u_t}(s)| d \mu(s),
\end{equation}
Hence the Legendre transform sends the $L^1$ geometry of $\mathcal H_\omega^T$ to the flat $L^1$ geometry of convex functions on $P$. In the particular case when $u_t := t$, we obtain the following useful formula about volumes:
\begin{equation}\label{eq: volume_formula}
\int_X \omega^n = V = \pi^n n! \int_P d\mu.
\end{equation}

Regarding the underlying path length metrics,  \eqref{eq: L^1_isom} has the following important consequence:
\begin{theorem}\label{thm: d_1_toric}Suppose $u_0,u_1 \in \mathcal H_\omega^{T}$. Then
\begin{equation}\label{eq: d_1_toric}
d_1(u_0,u_1)=\frac{1}{ \mu(P)} \int_P |\phi_{u_0}(s)-\phi_{u_1}(s)| d \mu(s),
\end{equation}
where $\mu(P)$ is the Lebesgue measure of $P$.
\end{theorem}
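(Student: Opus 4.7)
The plan is to combine the Finsler-to-flat-$L^1$ correspondence already derived in the excerpt with the trivial flat geometry of convex functions, then reduce to toric paths via torus symmetrization.

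For any smooth toric path $[0,1]\ni t \to u_t \in \mathcal H^T_\omega$ joining $u_0$ and $u_1$, the isometry \eqref{eq: L^1_isom} combined with the volume identity $V = \pi^n n!\, \mu(P)$ (which is \eqref{eq: volume_formula}) yields
\begin{equation*}
\int_0^1 \frac{1}{V} \int_X |\dot u_t|\, \omega_{u_t}^n\, dt \;=\; \int_0^1 \frac{1}{\mu(P)} \int_P |\dot \phi_{u_t}(s)|\, d\mu(s)\, dt.
\end{equation*}
Fubini together with the elementary estimate $\int_0^1 |\dot \phi_{u_t}(s)|\, dt \geq |\phi_{u_1}(s) - \phi_{u_0}(s)|$ (valid for a.e.\ $s \in P$) gives the lower bound $\frac{1}{\mu(P)} \int_P |\phi_{u_1} - \phi_{u_0}|\, d\mu$. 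Equality is realized along the straight-line interpolant $\phi_t := (1-t)\phi_{u_0} + t\phi_{u_1}$: convexity is preserved under convex combinations, $\phi_t - \phi_0$ remains smooth on $\overline{P}$, so $\phi_t \in \mathcal C(P)$, and its inverse Legendre transform produces a smooth toric path attaining the bound. Hence the infimum of length over smooth \emph{toric} paths joining $u_0$ and $u_1$ equals the right-hand side of \eqref{eq: d_1_toric}.

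It remains to show this toric infimum coincides with $d_1(u_0,u_1)$, i.e., that non-toric paths cannot shorten the distance. I would argue by $\Bbb T^n$-averaging: given any smooth path $t \to u_t$ in $\mathcal H_\omega$ with $u_0,u_1 \in \mathcal H^T_\omega$, set $\bar u_t(x) := \int_{\Bbb T^n} u_t(\sigma_\tau x)\, d\tau$ with normalized Haar measure. Since the endpoints are already toric, $\bar u_0 = u_0$ and $\bar u_1 = u_1$, and each $\bar u_t \in \mathcal H^T_\omega$. Approximating the Haar integral by finite convex combinations and exploiting the convexity of the Finsler length functional along such combinations in $\mathcal H_\omega$ (together with $\Bbb T^n$-invariance of $\omega$) yields that the length of $\bar u_t$ does not exceed that of $u_t$, reducing the $d_1$-infimum to toric competitors.

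The main technical obstacle is this length-nonincrease under averaging, since the Finsler integrand depends nonlinearly on $u_t$ through the measure $\omega_{u_t}^n$. A cleaner alternative bypassing it entirely is to invoke the envelope formula \eqref{eq: d_1_formula}: the defining competitors for $P(u_0,u_1)$ form a $\Bbb T^n$-stable class, so $P(u_0,u_1)$ is toric and on the convex side corresponds to $\max(\phi_{u_0},\phi_{u_1})$ (Legendre transform turns infimal envelopes of convex functions into suprema). The identity $2\max(a,b) - a - b = |a-b|$ then matches \eqref{eq: d_1_toric} once each $I$-term is translated into an integral over $P$ via a direct Legendre calculation.
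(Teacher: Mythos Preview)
Your first paragraph --- using \eqref{eq: L^1_isom} and \eqref{eq: volume_formula} to equate the Finsler length of a toric path with the flat $L^1$ length of its Legendre transform, and then realizing the infimum via the affine interpolant $\phi_t = (1-t)\phi_{u_0} + t\phi_{u_1}$ --- is exactly the paper's argument. The paper's proof in fact stops there: it writes down the two infima and asserts they agree, without explicitly treating the possibility that a non-toric competitor could be shorter.

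You go further by isolating this point, and your envelope-formula alternative is a genuinely different and self-contained route. Since $P(u_0,u_1)$ is torus-invariant, its Legendre dual is $\max(\phi_{u_0},\phi_{u_1})$; plugging into \eqref{eq: d_1_formula} together with the integral expression for $I$ (which is \eqref{eq: Aubin_Yau_toric}, derived independently of this theorem) and the identity $2\max(a,b)-a-b=|a-b|$ yields \eqref{eq: d_1_toric} with no path-length minimization at all. This buys you a proof that sidesteps the toric-reduction question entirely, whereas the paper's approach (and your first paragraph) implicitly relies on the fact that the $d_1$-geodesic between toric endpoints is itself toric. The only small caveat is that $P(u_0,u_1)$ is merely $C^{1,\bar 1}$ rather than smooth, so one should note that both \eqref{eq: d_1_formula} and \eqref{eq: Aubin_Yau_toric} extend to such potentials by approximation. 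Your averaging sketch is the right geometric intuition but, as you correctly flag, is awkward to make rigorous directly because of the nonlinear dependence of the length integrand on $\omega_{u_t}^n$; the envelope route is the cleaner of your two options.
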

\begin{proof}By definition $d_1(u_0,u_1)$ is the infimum of the $L^1$ Mabuchi length of smooth curves $t \to u_t$ connecting $u_0$ and $u_1$:
$$d_1(u_0,u_1) = \inf_{t \to u_t}\frac{1}{V}\int_0^1 \int_X |\dot u_t| \omega_{u_t}^n.$$

 Similarly, the integral $\int_P |\phi_{u_0}-\phi_{u_1}| d\mu$ is equal to the infimum of flat $L^1$ length of smooth curves $t \to \phi_t$ of convex functions on $P$, connecting $\phi_{u_0}$ and $\phi_{u_1}$:
 $$\int_P |\phi_{u_0}-\phi_{u_1}| d\mu = \inf_{t \to \phi_t} \int_0^1 \int_{P}|\dot \phi_t| d\mu.$$

Comparing with \eqref{eq: L^1_isom} and \eqref{eq: volume_formula}, the identity \eqref{eq: d_1_toric} follows.
\end{proof}

\paragraph{The $I$ and $J$ functionals of toric metrics.} In this paragraph we analyze the $I$ and the $J$ functionals in terms of the Legendre transform.

As it turns out, the Monge--Amp\`ere energy is essentially the Lebesgue integral, after applying the Legendre transform. Indeed, let $[0,1] \ni t \to v_t \in \mathcal H_\omega^T$ be any smooth curve connecting $v_0 =0$ and $v_1 = u$. We then obtain the following formula:
\begin{flalign}\label{eq: Aubin_Yau_toric}
I(u) &= I(u)-I(0) = \int_0^1\frac{d}{dt}I(v_t)dt= \int_0^1\frac{1}{V}\int_X \dot v_t \omega_{v_t}^n dt = \int_0^1\frac{1}{V}\int_{(\Bbb C^*)^n} \dot v_t \omega_{v_t}^n dt. \nonumber \\
&=\int_0^1\int_{(\Bbb C^*)^n} \frac{1}{V} \dot \psi_{v_t} \circ L  (i\ddbar \psi_{v_t} \circ L)^ndt \nonumber \\
&=\frac{1}{\mu(P)}  \int_0^1 \int_{\Bbb R^n} \dot \psi_{v_t} MA_{\Bbb R}(\psi_{v_t})dt \nonumber\\
&=\frac{-1}{\mu(P)}  \int_0^1 \int_{P} \dot \phi_{v_t}(s) d \mu(s) dt=\frac{-1}{\mu(P)} \int_P (\phi_u(s) - \phi_0(s))d \mu(s),
\end{flalign}

where in the second line we used \eqref{eq: CMAE_id}, in the third line we used again the change of variables $s = \nabla \psi_{v_t}$, similar to \eqref{eq: L^1_isom}, and in the last line we used \eqref{eq: dot_ident}. Since $\phi_0 \in L^\infty(P)$ (\cite[Proposition 4.5]{Gu14}), we conclude that there exists $C = C(X,\omega)>0$ such that 
\begin{equation}\label{eq: I_magn_est}
 \frac{-1}{\mu(P)} \int_P \phi_u d\mu - C \leq I(u) \leq \frac{-1}{\mu(P)} \int_P \phi_u d\mu + C, \ \ u \in \mathcal H_\omega^T.
\end{equation}

A closed formula for the $J$ energy is likely not available in terms of the Legendre transform, but we can express its magnitude in relatively simple terms, which is sufficient for our later analysis:
\begin{prop}\label{prop: J_toric} There exists $C:=C(X,\omega)>0$ such that for all $u \in \mathcal H_\o^T$ with $I(u)=0$ we have
$$-\inf_P \phi_u - C \leq J(u) = \frac{1}{V}\int_X u \omega^n \leq -\inf_P \phi_u + C.$$
\end{prop}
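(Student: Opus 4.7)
The equality $J(u) = \frac{1}{V}\int_X u\,\omega^n$ is immediate from the definition of $J$ and the assumption $I(u)=0$, so the task reduces to proving
$$-\inf_P \phi_u - C \le \tfrac{1}{V}\int_X u\,\omega^n \le -\inf_P \phi_u + C$$
for some $C=C(X,\omega)$. My plan is to bridge these two quantities through $\sup_X u$: first I will show $\tfrac{1}{V}\int_X u\,\omega^n = \sup_X u + O(1)$, and then I will use Legendre duality to show $\sup_X u = -\inf_P \phi_u + O(1)$.

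The first bridge is standard on the Kähler side. The bound $\tfrac{1}{V}\int_X u\,\omega^n \leq \sup_X u$ is trivial, while the reverse estimate $\sup_X u \leq \tfrac{1}{V}\int_X u\,\omega^n + C'$ is \cite[Lemma 3.45]{Da19} (already invoked in Proposition~\ref{prop: d_1_growth_J}), with $C'$ depending only on $(X,\omega)$. For the second bridge, I first observe that since $u$ is $\mathbb T^n$-invariant, continuous on $X$, and $(\mathbb C^*)^n \subset X$ is dense with $u = (\psi_u - \psi_0)\circ L$ on $(\mathbb C^*)^n$, one has $\sup_X u = \sup_{\mathbb R^n}(\psi_u - \psi_0)$.

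The heart of the argument is the two-sided estimate
$$-\inf_P \phi_u + \inf_P \phi_0 \;\le\; \sup_{\mathbb R^n}(\psi_u - \psi_0) \;\le\; -\inf_P \phi_u + \sup_P \phi_0,$$
where $\phi_0 \in L^\infty(P)$ by \cite[Proposition 4.5]{Gu14}, making both correction terms $O(1)$ constants depending only on $(X,\omega)$. For the upper bound, fix an arbitrary $x \in \mathbb R^n$ and set $s := \nabla \psi_u(x) \in P$. The Legendre identity gives $\psi_u(x) = \langle s,x\rangle - \phi_u(s)$, while the defining supremum of $\psi_0^{**}=\psi_0$ gives $\psi_0(x) \ge \langle s,x\rangle - \phi_0(s)$. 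Subtracting yields $\psi_u(x) - \psi_0(x) \le \phi_0(s) - \phi_u(s) \le \sup_P \phi_0 - \inf_P \phi_u$, and taking the supremum over $x$ concludes. For the lower bound, given $\varepsilon > 0$, choose $s^* \in \textup{int}(P)$ with $\phi_u(s^*) \le \inf_P \phi_u + \varepsilon$; set $x^* := \nabla \phi_0(s^*) \in \mathbb R^n$, the Legendre inverse of $\nabla \psi_0$ at $s^*$. Then $\psi_0(x^*) = \langle s^*,x^*\rangle - \phi_0(s^*)$, while $\psi_u(x^*) \ge \langle s^*,x^*\rangle - \phi_u(s^*)$ by definition of $\phi_u$; subtracting and sending $\varepsilon \to 0$ produces the desired lower bound.

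The only delicate point is ensuring $s^* \in \textup{int}(P)$ in the lower bound so that $\nabla \phi_0(s^*)$ makes sense; this is handled by the $\varepsilon$-approximation together with continuity of the convex function $\phi_u$ on $\textup{int}(P)$ (noting that $\phi_u - \phi_0$ is globally bounded on $P$ by $\|\psi_u - \psi_0\|_\infty$, since the Legendre transform is $1$-Lipschitz in the sup norm). Combining the three steps proves the proposition.
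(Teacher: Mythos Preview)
Your proof is correct, but it takes a different route from the paper for the second bridge. The paper observes directly that $-\inf_P \phi_u = \psi_u(0) = u(E(0))$ (the infimum of the Legendre transform is attained at $s_0=\nabla\psi_u(0)$, where $\phi_u(s_0)=-\psi_u(0)$), so the problem reduces to showing that the single value $u(E(0))$ differs from $\sup_X u$ by $O(1)$. For this the paper argues analytically: a Markov-type estimate $\int_{\{u<\sup_X u - h\}}\omega^n \le C/h$ forces the superlevel set $\{u\ge \sup_X u - h\}$ to meet $E(B(0,1))$ for some fixed $h$, and on that ball $u\circ E$ oscillates by at most $C(P)$ because $\nabla(u\circ E)=\nabla\psi_u-\nabla\psi_0$ is bounded (both gradients land in $P$). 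Your argument is purely convex-geometric: you never single out the point $x=0$, instead sandwiching $\sup_{\mathbb R^n}(\psi_u-\psi_0)$ between $-\inf_P\phi_u+\inf_P\phi_0$ and $-\inf_P\phi_u+\sup_P\phi_0$ via the Young/Fenchel inequality, with the $O(1)$ constants coming from $\phi_0\in L^\infty(P)$. Your approach is shorter and avoids the measure-theoretic step; the paper's approach (adapted from \cite{ZZ08}) stays closer to the potential-theoretic side and does not explicitly invoke the boundedness of $\phi_0$. Incidentally, your $\varepsilon$-approximation is unnecessary: the infimum of $\phi_u$ is actually attained at the interior point $\nabla\psi_u(0)$.
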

The argument is adapted from \cite[Lemma 2.2]{ZZ08}.

\begin{proof} Notice that $-\inf_P \phi_u = \psi_u(0),$ hence it is enough to prove existence of $C=C(X,\omega)>0$ such that:
$$\psi_u(0) - C \leq \frac{1}{V}\int_{(\Bbb C^*)^n} u  \omega^n \leq \psi_u(0) + C.$$
Since $\psi_u(0) =\psi_0(0) + u(E(0))=u(E(0)) \leq \sup_X u$,  by \cite[Lemma 3.45]{Da19}, the first estimate is trivial. 

Next we show that there exists $C:=C(X,\omega)>0$ such that for any $v \in \textup{PSH}(X,\omega)$ and constant $h > 0$ we have
\begin{equation}\label{eq: h_est}
\int_{\{v < \sup_X v - h\}}\omega^n \leq \frac{C}{h}.
\end{equation}
Indeed, from \cite[Lemma 3.45]{Da19} we have that
\begin{flalign*}
V(\sup_X v - C(X,\omega)) \leq \int_X v \omega^n &= \int_{\{v < \sup_X v -h \}} v \omega^n + \int_{\{v \geq  \sup_X v -h \}} v \omega^n\\
& \leq (\sup_X v - h) \int_{\{ v <  \sup_X v -h\}}\omega^n + \sup_X v \int_{\{v \geq  \sup_X v -h \}} \omega^n.
\end{flalign*}
This implies \eqref{eq: h_est}. Next, since  
$\textup{Im }\nabla \psi_{u} = \textup{Im }\nabla \psi_{0}=P$ and $\nabla u \circ E = \nabla \psi_u - \nabla \psi_0$, it follows that $\nabla u \circ E$ is uniformly bounded on the unit ball $B(0,1) \subset \Bbb R^n$. This implies that
\begin{equation}\label{eq: u_est}
|u(E(x)) - u(E(0))| \leq C(P), \ \ x \in B(0,1).
\end{equation}
By \eqref{eq: h_est}, there exists a constant $h := h(X,\omega)$ such that $\{u \circ E \geq  \sup_X u - h \}$ intersects $B(0,1)$. This together with \eqref{eq: u_est}, gives the desired inequality: 
$\psi_u(0)=u(E(0))\geq \sup_X u - C \geq \frac{1}{V}\int_X u \omega^n - C.$
\end{proof}

\section{A sharp inequality for convex functions}

In this section we prove the following sharp double inequality about convex functions, stated in Theorem 1.2:

 \begin{theorem}\label{thm: thm_convex} Let $P \subset \Bbb R^n$ be a bounded open convex set. Then for $f \in L^1(P)$ convex and satisfying $\int_P \phi d\mu = 0$, the following sharp inequalities hold:  
\begin{equation}\label{eq: thm_convex} 
 -\frac{2}{n+1}\cdot{\left(\frac{n}{n+1}\right)}^n\inf_P \phi \leq \frac{1}{\mu(P)}\int_P |\phi| d\mu \leq -2\inf_P \phi,
 \end{equation}
where the integration is in terms of the Lebesgue measure.
\end{theorem}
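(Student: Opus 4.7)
The plan is to handle the two sides of \eqref{eq: thm_convex} separately. The upper bound is nearly immediate: writing $\phi = \phi_+ - \phi_-$, the hypothesis $\int_P \phi\,d\mu = 0$ gives $\int_P|\phi|\,d\mu = 2\int_P \phi_-\,d\mu$, and since $\phi_-(x) \le -\inf_P \phi$ pointwise, we get $\int_P \phi_-\,d\mu \le -\mu(P)\inf_P\phi$. Sharpness of this estimate is realized asymptotically by convex $\phi^\varepsilon$ that are essentially equal to $\inf_P\phi$ on $P$ except on a vanishing subset where $\phi^\varepsilon$ rises sharply to restore the mean-zero condition.

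For the lower bound, set $m := -\inf_P \phi$. Since $\phi$ is convex, its sub-level sets $\{\phi \le t\}$ are convex, and Brunn--Minkowski implies that $V(t) := \mu(\{\phi \le t\})$ has $V^{1/n}$ concave in $t$. Introducing the non-decreasing rearrangement $\phi^*$ of $\phi$ on $[0,\mu(P)]$ and defining $\psi(s) := \phi^*(s^n)$ for $s \in [0,\mu(P)^{1/n}]$, the concavity of $V^{1/n}$ translates into convexity (and monotonicity) of $\psi$. Using rearrangement invariance together with the substitution $r = s^n$, the inequality to be proved reduces (after normalizing $\mu(P) = 1$) to showing that for every convex non-decreasing $\psi:[0,1]\to\Bbb R$ with $\psi(0) = -m$ and $\int_0^1 \psi(s)\,s^{n-1}\,ds = 0$, one has
\[
n\int_0^1 |\psi(s)|\,s^{n-1}\,ds \;\ge\; \frac{2m\,n^n}{(n+1)^{n+1}}.
\]
Let $y_0 \in (0,1)$ denote the unique zero of $\psi$. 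The mean-zero constraint gives $\int_0^1|\psi|\,s^{n-1}\,ds = 2\int_0^{y_0}(-\psi)\,s^{n-1}\,ds$, and the chord upper bound $\psi(s) \le m(s/y_0 - 1)$ on $[0,y_0]$ yields $\int_0^{y_0}(-\psi)\,s^{n-1}\,ds \ge m y_0^n/(n(n+1))$. The desired inequality will therefore follow from the key estimate $y_0 \ge n/(n+1)$.

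Proving $y_0 \ge n/(n+1)$ is the main obstacle. Let $\kappa := \psi'(y_0^+)$, which is strictly positive because $\kappa = 0$ would combine with $\psi(y_0) = 0$ and convexity to force $\psi \ge 0$ everywhere, contradicting $\psi(0) = -m < 0$. The tangent inequality for convex functions gives $-\psi(s) \le \kappa(y_0 - s)$ on $[0,y_0]$ and $\psi(s) \ge \kappa(s - y_0)$ on $[y_0, 1]$. The mean-zero constraint equates $\int_0^{y_0}(-\psi)\,s^{n-1}\,ds$ with $\int_{y_0}^1\psi\,s^{n-1}\,ds$, so these tangent bounds sandwich a common value, yielding
\[
\kappa\int_{y_0}^1 (s-y_0)\,s^{n-1}\,ds \;\le\; \kappa\int_0^{y_0}(y_0-s)\,s^{n-1}\,ds.
\]
A direct evaluation of the two integrals and division by $\kappa$ collapses this to $n - (n+1)y_0 \le 0$, i.e.\ $y_0 \ge n/(n+1)$, completing the argument. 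Sharpness of the lower bound is realized by $\psi(s) = -m + m(n+1)s/n$, which is affine, saturates the chord and tangent bounds simultaneously, and has $y_0 = n/(n+1)$; in the toric setting of Theorem~\ref{thm: main_thm}, this corresponds via the Legendre transform to an affine K\"ahler potential on a simplex.
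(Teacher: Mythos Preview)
Your argument is correct and takes a genuinely different route from the paper's. Both proofs rest on the same geometric fact about sub-level sets---namely that $t\mapsto \mu(\{\phi\le t\})^{1/n}$ is concave---but they arrive at it and exploit it differently. The paper obtains the weaker but sufficient statement $\mu(P_b)\le\big(\tfrac{1+b}{1+a}\big)^n\mu(P_a)$ by an elementary dilation toward the (near) minimum of $\phi$, and then runs two layer-cake computations in $\Bbb R^n$, introducing a parameter $A=\tfrac{1}{\mu(P_-)}\int_{P_+}\phi$ and making the specific choice $b=1/n$ to close the estimate. You instead invoke Brunn--Minkowski, pass to the non-decreasing rearrangement, and reduce to a one-dimensional problem for a convex increasing $\psi$ against the weight $s^{n-1}$; the chord bound on $[0,y_0]$ replaces the paper's first layer-cake step, and the supporting-line bound at $y_0$ cleanly yields $y_0\ge n/(n+1)$, which in the paper's language corresponds to the endpoint $A=1/(n+1)$ of their optimization. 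Your packaging is arguably more transparent: the extremizer (affine $\psi$, i.e.\ the simplex example) is visible as the case where chord and tangent coincide, whereas in the paper the choice $b=1/n$ looks a bit ad hoc. On the other hand, the paper's approach is slightly more elementary in that it avoids Brunn--Minkowski, and it is the template reused verbatim in Section~5 for general K\"ahler rays (where the concavity of $(\int_X\omega_{\hat u_\tau}^n)^{1/n}$ plays the role of your $V^{1/n}$). A minor point: your ``unique zero $y_0$'' should be read as the infimum of the zero set of $\psi$, and you should note $m>0$ (else $\phi\equiv 0$ and the inequality is trivial) before dividing by $\kappa$; neither affects the argument.
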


\begin{proof} First we argue the second inequality. Let 
\begin{equation}P_- := \{x \in P : \phi \leq  0\}  \ 
\  \textup{  and } \  \ P_+ := \{x \in P : \phi > 0\} 
\end{equation}
 Since $\int_P\phi=\int_{P_-}\phi+\int_{P_+}\phi=0,$ we have
\[\int_P |\phi| = -\int_{P_-} \phi + \int_{P_+} \phi = -2\int_{P_-} \phi.\]
Furthermore, $\int_{P_-} \phi \geq \mu\left(P_-\right)\inf_{P_-} \phi \geq \mu(P) \inf_P \phi$, and the second estimate follows.

We move on to the first estimate. For all $a \in \mathbb{R}$, let 
$$P_a := \{x \in P : \phi(x) \leq a\}.$$ 
Through scaling of $\phi$, we can assume without loss of generality that $\inf_P \phi = -1$.  We first claim that 
\begin{equation}\label{eq: claim_est}
\int_P |\phi| \geq -\frac{2}{n+1}\mu\left(P_-\right)\inf_P \phi.
\end{equation}
Through translation of $\phi$ and $P$, we can momentarily assume that $\phi(0) = -1 + \varepsilon$ for an arbitrarily small $\varepsilon > 0$. By convexity of $\phi$, if $-1 + \varepsilon < a < b$, we have
\[P_b \subset \frac{b+1-\varepsilon}{a+1-\varepsilon}P_a
\text{~~and~~}
\mu\left(P_b\right) \leq {\left(\frac{b+1-\varepsilon}{a+1-\varepsilon}\right)}^n\mu\left(P_a\right).\]

But translating $\phi$ and $P$ does not change the measure of any of the sets involved, so the above relation holds for all $\varepsilon > 0$ regardless of the actual value of $\phi(0)$.  Thus, 
\begin{equation} \label{eq: P_a_P_b_est}
\mu(P_b) \leq {\left(\frac{1+b}{1+a}\right)}^n\mu\left(P_a\right) \textup{ for any } -1 < a < b.
\end{equation}

Since $\int_P \phi = \int_{P_+} \phi + \int_{P_-} \phi = 0$, we have $\int_P |\phi| = 2\int_{P_-} |\phi|$, allowing us to bound $\int_{P} |\phi|$ in the following manner, proving the claim:
\begin{align*}
    \int_P |\phi| &= 2 \int_{P_-} |\phi| = 2\int_{-1}^0 \mu\left(P_x\right)dx \geq 2\int_{-1}^0 {(1+x)}^n\mu\left(P_-\right)dx = \frac{2}{n+1}\mu(P_-),
\end{align*}
where we used that $\int f d \mu = \int_0^{+\infty} \mu\{f \geq t\} dt$ for any non-negative $\mu$-measurable $f$, estimate \eqref{eq: P_a_P_b_est} for $-1 < x < 0$, and the fact that $ P_0 = P_-$. 

Let $b >0$. Next we estimate $\int_{P_+}\phi$ similarly, applying \eqref{eq: P_a_P_b_est} for $-1 < 0 < x$:
\begin{align*}
    \int_{P_+} \phi &\geq \int_0^b \mu\left(P \setminus P_x\right)dx =\int_0^b \mu\left(P\right) - \mu\left(P_x\right)dx\\
    &\geq \int_0^b \mu\left(P\right) - {(1+x)}^n\mu\left(P_-\right)dx \\
    &= b\mu(P) - \frac{{(b+1)^{n+1}}}{n+1}\mu(P_-) + \frac{1}{n+1}\mu(P_-)
\end{align*}
We then let $b = \frac{1}{n}$ and $\frac{1}{2}\int_{P}|\phi| =\int_{P_+}|\phi| = \int_{P_-}|\phi| = A\mu(P_-)$ for some $A>0$.  This gives
\[A\mu(P_-) = \int_{P_+} \phi\geq \frac{1}{n}\mu(P) - \frac{1}{n}\cdot{\left(\frac{n+1}{n}\right)}^{n}\mu(P_-) + \frac{1}{n+1}\mu(P_-),\]
implying
\[\int_P |\phi|=2A\mu(P_-) \geq \frac{2A}{nA + {\left(\frac{n+1}{n}\right)}^n - \frac{n}{n+1}}\mu(P).\]
The right-hand side is an increasing function of $A$ and by \eqref{eq: claim_est} we know $A \geq \frac{1}{n+1}$.  This means the right hand side is minimized at this value, so
\[\int_P |\phi| \geq \frac{\frac{2}{n+1}}{{\left(\frac{n+1}{n}\right)}^n}\mu(P)
= \frac{2}{(n+1)}\cdot{\left(\frac{n}{n+1}\right)}^n\mu(P).\]

\end{proof}

Finally, we address the tightness of the bounds in the previous theorem.

\begin{prop}
\label{prop:main_theo_upper_2}
For all $n > 0$ there exists an open bounded convex subset $P  \subset \mathbb{R}^n$ and convex function $\phi: P \to \mathbb{R}$ integrating to zero such that
\[\frac{1}{\mu(P)}\int_P |\phi| d\mu= -\frac{2}{n+1}\cdot{\left(\frac{n}{n+1}\right)}^n\inf_P \phi.\]
\end{prop}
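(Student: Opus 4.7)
The plan is to reverse-engineer an extremizer by tracing through the equality conditions in the proof of Theorem~\ref{thm: thm_convex}. Equality in the first estimate requires: (i)~equality in \eqref{eq: P_a_P_b_est} for all relevant $a,b$, so all sublevel sets $P_a$ are homothetic copies of a single set from a common center where $\phi$ attains its minimum; (ii)~equality in the lower bound for $\int_{P_+}\phi$ obtained by integrating $\mu(P)-\mu(P_x)$, together with $b=1/n$ actually reaching the supremum of $\phi$, i.e.\ $P=P_{1/n}$; and (iii)~the normalization $A=1/(n+1)$ forced by the inequality $A\geq 1/(n+1)$ being tight. These three conditions together suggest a radially symmetric cone whose value at the apex is $-1$ and whose value on $\partial P$ is $1/n$, with $P$ a Euclidean ball centered at the apex.

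Concretely, I propose to take
\[
P := B\!\left(0,\tfrac{n+1}{n}\right)\subset\mathbb R^n,\qquad \phi(x):=|x|-1.
\]
Then $\phi$ is convex, $\inf_P\phi=\phi(0)=-1$, and the sublevel set $P_a=\{\phi\leq a\}=B(0,1+a)$ is a ball for every $-1\leq a\leq 1/n$, so \eqref{eq: P_a_P_b_est} holds with equality throughout. In particular $P_0=B(0,1)$ and $P_{1/n}=P$, which matches the identified equality conditions.

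The remaining verification is a direct calculation using radial coordinates and the standard formula $\mathrm{vol}(B(0,R))=\omega_n R^n$. One computes
\[
\int_P\phi\, d\mu = n\omega_n\int_0^{(n+1)/n}(r-1)r^{n-1}\,dr = n\omega_n\!\left(\frac{n+1}{n}\right)^{n}\!\left(\frac{1}{n}-\frac{1}{n}\right) = 0,
\]
so the normalization $\int_P\phi=0$ is automatic. Splitting at $r=1$, the same radial integration gives
\[
\int_P|\phi|\,d\mu = n\omega_n\!\left(\int_0^1(1-r)r^{n-1}dr+\int_1^{(n+1)/n}(r-1)r^{n-1}dr\right)=\frac{2\omega_n}{n+1},
\]
while $\mu(P)=\omega_n\bigl((n+1)/n\bigr)^n$. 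Dividing yields
\[
\frac{1}{\mu(P)}\int_P|\phi|\,d\mu \;=\; \frac{2}{n+1}\cdot\!\left(\frac{n}{n+1}\right)^{n} \;=\; -\frac{2}{n+1}\!\left(\frac{n}{n+1}\right)^{n}\inf_P\phi,
\]
which is the claimed equality.

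There is no real obstacle here: once the correct guess (a radial cone on a ball whose radius is exactly $(n+1)/n$) is extracted from the equality analysis of Theorem~\ref{thm: thm_convex}, the verification reduces to two one-variable radial integrals. The only subtle point is making sure that $\phi(0)=-1$ and the radius $(n+1)/n$ simultaneously produce $\int_P\phi=0$; this is precisely the algebraic coincidence $\tfrac{1}{n+1}\bigl(\tfrac{n+1}{n}\bigr)^{n+1}=\tfrac{1}{n}\bigl(\tfrac{n+1}{n}\bigr)^{n}$ that forces the cancellation in the computation of $\int_P\phi$, and this is exactly what dictates the choice of radius in the first place.
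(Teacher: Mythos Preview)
Your proof is correct: the ball $P=B(0,(n+1)/n)$ with the cone $\phi(x)=|x|-1$ is a genuine extremizer, and your radial computations check out. The paper takes a different route, using the simplex with vertices $0,\tfrac{n+1}{n}e_1,\dots,\tfrac{n+1}{n}e_n$ and the \emph{affine} function $\phi(x)=-1+x_1+\cdots+x_n$; the verification there proceeds by computing $\mu(S_x)=x^n/n!$ for the sublevel simplices and integrating. Your example is arguably cleaner analytically (everything reduces to two one-variable integrals), while the paper's has the advantage that $P$ is a polytope and $\phi$ is linear, so the curve $t\mapsto t\phi$ immediately yields a toric geodesic ray---this is exactly what is used later (see the Remark after Theorem~\ref{thm: radial_ineq}) to confirm sharpness of the radial inequality on actual K\"ahler manifolds. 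A ball is not a Delzant polytope, so your extremizer, while perfectly valid for the proposition as stated, would not serve that downstream purpose without modification.
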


\begin{proof}
Fix $n$.  Take $P \subset \mathbb{R}^n$ to be the interior of the simplex with vertex set
\[\left\{\left(0, \dots, 0\right), \left(\frac{n+1}{n}, 0, \dots, 0\right), \dots, \left(0, \dots, 0, \frac{n+1}{n}\right)\right\}\]
and $\phi: P \to \mathbb{R}$ to be the function defined by
\[\phi(x_1, \dots, x_n) = -1 + x_1 + \dots + x_n.\]
It is clear that $P$ and $\phi$ are convex and that $\inf_P \phi = -1$.  To evaluate the integrals of $\phi$ and $|\phi|$, first let $S_x$ to be the simplex with vertices
\[\left\{\left(0, \dots, 0\right), \left(x, 0, \dots, 0\right), \dots, \left(0, \dots, 0, x\right)\right\}.\]
The volume of $S_x$ is just $\frac{x^n}{n!}$.  This is the region where $\phi$ takes values less than or equal to $x-1$.  We can use the same technique from the previous theorem to show
\begin{align*}
    \int_{P_+} \phi &= \int_1^\frac{n+1}{n} \mu(P \setminus S_x)dx = \int_1^\frac{n+1}{n} \frac{1}{n!}\left({\left(\frac{n+1}{n}\right)}^n - x^n\right)dx \\
    &= \frac{1}{n}\cdot\frac{1}{n!}{\left(\frac{n+1}{n}\right)}^n - \frac{1}{(n+1)!}{\left(\frac{n+1}{n}\right)}^{n+1} + \frac{1}{(n+1)!} \\
    &= \frac{1}{(n+1)!}
\end{align*}
and
\begin{align*}
    \int_{P_-} -\phi = \int_0^1 \mu(S_{(1-x)})d x = \int_0^1 \frac{(1-x)^{n}}{n!}dx = \frac{1}{(n+1)!},
\end{align*}
where $P_+$ and $P_-$ are again the portions of the domain where $\phi$ is positive and negative, respectively.  Combined these yield $\int_P \phi = 0$ and $\int_P |\phi| = \frac{2}{(n+1)!}$.

As $\mu(P)$ is just the volume of $S_\frac{n+1}{n}$, it follows that
\[\frac{1}{\mu(P)}\int_P |\phi| = \frac{2}{n+1}\cdot{\left(\frac{n}{n+1}\right)}^n.\]
\end{proof}

\begin{prop}\label{prop:main_theo_lower}For all $n>0$ there exists  an open bounded convex set $P  \subset \mathbb{R}^n$  such that
\[\sup_{\phi}\frac{\int_P |\phi|}{-\mu(P)\inf_P \phi} = 2.\]
\end{prop}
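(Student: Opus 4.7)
The plan is to produce a single convex set (in fact, any reasonable one will do) together with a one-parameter family of convex functions whose ratio $\int_P|\phi|/(-\mu(P)\inf_P\phi)$ approaches $2$. Combined with the upper bound $\int_P|\phi|\le -2\mu(P)\inf_P\phi$ from the second inequality in Theorem \ref{thm: thm_convex}, this pins the supremum to exactly $2$.

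For concreteness, take $P = (0,1)^n$, so $\mu(P) = 1$. For small $\varepsilon > 0$, I would consider functions that depend only on the first coordinate $x_1$, built to be $-1$ on most of $P$ and to ramp up steeply on a thin slab near the face $\{x_1 = 1\}$. Specifically, set
\[
\phi_\varepsilon(x_1,\ldots,x_n) := \max\!\left\{-1,\ \tfrac{2}{\varepsilon}\bigl(x_1 - (1-\varepsilon)\bigr) - 1\right\}.
\]
Each $\phi_\varepsilon$ is convex (being the maximum of two affine functions) and lies in $L^1(P)$, with $\inf_P \phi_\varepsilon = -1$.

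The key step is an elementary Fubini/1D calculation. On $P$, $\phi_\varepsilon$ equals $-1$ on the slab $\{x_1 \le 1-\varepsilon\}$, and on $\{x_1 > 1-\varepsilon\}$ it is the linear function rising from $-1$ to $-1+2$. A direct evaluation gives $\int_P \phi_\varepsilon\, d\mu = -(1-\varepsilon) + (-\varepsilon + \varepsilon \cdot M/2) = 0$ with $M = 2/\varepsilon$, verifying the zero-mean hypothesis. Since $\int_P\phi_\varepsilon = 0$, we have $\int_P|\phi_\varepsilon| = 2\int_{\{\phi_\varepsilon < 0\}}(-\phi_\varepsilon)$; computing the latter gives $(1-\varepsilon) + \varepsilon^2/4$, so
\[
\frac{\int_P|\phi_\varepsilon|\,d\mu}{-\mu(P)\inf_P\phi_\varepsilon} = 2(1-\varepsilon) + \tfrac{\varepsilon^2}{2} \ \longrightarrow\ 2 \quad \text{as } \varepsilon \to 0^+.
\]
Combined with the universal upper bound $\le 2$ from Theorem \ref{thm: thm_convex}, this yields equality of the supremum with $2$.

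I do not expect any real obstacle here: the construction is one-dimensional in disguise, the convexity is obvious from the max-of-affines description, and the integrals reduce to integrals of piecewise linear functions in a single variable. The one mild point of care is to ensure the functions genuinely lie in the admissible class (convex, integrable, zero mean), which is checked directly. No supremum is attained — the supremum is approached in the limit $\varepsilon \to 0$ as the ramp becomes increasingly narrow — so the statement asserts equality of the supremum rather than the existence of a maximizer, matching what our family delivers.
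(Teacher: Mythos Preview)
Your approach is exactly the paper's: build a one-variable piecewise-linear ``ramp'' function with zero mean, extend it trivially to $(0,1)^n$, and let the ramp become thin. However, there is an arithmetic slip in your definition of $\phi_\varepsilon$ that breaks the zero-mean condition. With slope $\tfrac{2}{\varepsilon}$ the ramp rises only from $-1$ to $+1$ over $[1-\varepsilon,1]$, so $\int_{1-\varepsilon}^1 \phi_\varepsilon\,dx_1 = 0$ and hence $\int_P \phi_\varepsilon = -(1-\varepsilon)\neq 0$. Your subsequent computations (with ``$M=2/\varepsilon$'' and the value $(1-\varepsilon)+\varepsilon^2/4$ for the negative part) are in fact the correct numbers for the function with slope $\tfrac{2}{\varepsilon^2}$, i.e.
\[
\phi_\varepsilon(x)=\max\Bigl\{-1,\ \tfrac{2}{\varepsilon^2}\bigl(x_1-(1-\varepsilon)\bigr)-1\Bigr\},
\]
which rises from $-1$ to $\tfrac{2}{\varepsilon}-1$ on the slab and does have zero mean. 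With this corrected slope your ratio computation $2(1-\varepsilon)+\tfrac{\varepsilon^2}{2}\to 2$ is valid, and together with the upper bound from Theorem~\ref{thm: thm_convex} the proof goes through. This is precisely the paper's construction up to the substitution $\varepsilon=1/m$ and a reflection $x_1\mapsto 1-x_1$.
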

\begin{proof}
First, suppose $n=1$ and let $P = (0,1)$.  Now, for all integers $m$, let
\[\phi_m(x) = \begin{cases}
2m-1-2m^2x \text{ if $x < \frac{1}{m}$} \\
-1 \text{ if $x \geq \frac{1}{m}$.}
\end{cases}\]
$\phi_m$ is convex, and it can be seen that $\int_P \phi_m = 0$.  It is also the case that
\[-\frac{\int_P |\phi_m|}{\mu(P)\inf_P \phi_m} = -2\int_{P_-} \phi_m = 2 - \frac{2}{m}.\]
Letting $m \to \infty$ shows that the supremum for $n=1$ is indeed 2.

For $n > 1$, let $P_n = {(0,1)}^n$ and $\phi_{n,m}(x_1, \dots, x_n) = \phi_m(x_1)$.  $\phi_{n,m}$ is still convex and we still have that $\int_{P_n} \phi_{n,m} = 0$ and $\int_{P_n} |\phi_{n,m}| = 2- \frac{2}{m}$.  Again, we can let $m \to \infty$ to show that the supremum is 2 for all $n$.
\end{proof}

\section{Proof of Theorems \ref{thm: main_thm} and \ref{thm: radial_main_ineq}}

\begin{proof}[Proof of Theorem \ref{thm: main_thm}] Using Theorem \ref{thm: d_1_toric} and Proposition \ref{prop: J_toric} the inequalities of \eqref{eq: main_thm} are equivalent to the inequalities of \eqref{eq: main_thm_convex}. Theorem \ref{thm: main_thm} is now a consequence of Theorem \ref{thm: thm_convex}
\end{proof}

In proving Theorem 1.3 we heavily rely on the formalism developed in \cite{DL18} regarding the metric space of geodesic rays. By $\mathcal R^1$ we denote the space of $L^1$ Mabuchi geodesic rays $[0,\infty) \ni t \to u_t \in \mathcal E^1$ that are normalized by $u_0 =0$ and $I(u_t) =0, \ t \geq 0$.

In \cite{DL18} the following metric was introduced for $\{u_t\}_t,\{v_t\}_t \in \mathcal R^1$:
$$d_1^c(\{u_t\}_t,\{v_t\}_t) = \lim_{t \to \infty} \frac{d_1(u_t,v_t)}{t}.$$
We know that $(\mathcal R^1,d_1^c)$ is complete \cite[Theorem 1.3 and 1.4]{DL18}, moreover the space of normalized bounded geodesic rays $\mathcal R^\infty$ is dense in $\mathcal R^1$ \cite[Theorem 1.5]{DL18}. Due to this and the next result it is  enough to prove Theorem \ref{thm: radial_main_ineq} for bounded geodesic rays:

\begin{lemma}\label{lem: R^infty_approx} Suppose that $\{u^j_t\}_t,\{u_t\}_t \in \mathcal R^1$ such that $d_1^c(\{u^j_t\},\{u_t\}_t) \to 0$. Then $J\{u_t^j\} \to J\{u_t\}$. 
\end{lemma}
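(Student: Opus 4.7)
The plan is to decompose the convergence $J\{u_t^j\}\to J\{u_t\}$ into two semi-continuity statements with respect to $d_1^c$, leveraging the monotone structure of both $J$ and $d_1$ along rays. Since $J$ is convex along the rays of $\mathcal R^1$ with $J(u_0)=0$, the ratio $s\mapsto J(u_s)/s$ is non-decreasing and $J\{u_t\}=\sup_{s>0} J(u_s)/s$. The corresponding feature for $d_1$ (a property of the $\mathcal R^1$ formalism of \cite{DL18}) is that $s\mapsto d_1(u_s^j,u_s)/s$ is non-decreasing with limit $d_1^c(\{u_t^j\},\{u_t\})$, so
\[ d_1(u_s^j,u_s) \leq s\cdot d_1^c(\{u_t^j\},\{u_t\}) \xrightarrow[j\to\infty]{} 0 \]
for every fixed $s>0$.

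For the lower semi-continuity $\liminf_j J\{u_t^j\}\geq J\{u_t\}$, I would combine this with the standard $d_1$-continuity of $J$ on $\mathcal E^1\cap I^{-1}(0)$ (cf.\ \cite{Da19}) to obtain $J(u_s^j)\to J(u_s)$ for every fixed $s>0$. The monotonicity $J(u_s^j)/s\leq J\{u_t^j\}$ then yields, upon passing $j\to\infty$, that $J(u_s)/s\leq \liminf_j J\{u_t^j\}$ for each $s$; taking $\sup_s$ gives $J\{u_t\}\leq \liminf_j J\{u_t^j\}$.

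The upper semi-continuity $\limsup_j J\{u_t^j\}\leq J\{u_t\}$ is the main obstacle. Applying Proposition \ref{prop: d_1_growth_J} directly to each $u_t^j$, dividing by $t$, and sending $t\to\infty$ only yields the multiplicative estimate $\limsup_j J\{u_t^j\}\leq 2^{2n+6} d_1(0,u_1)$, which is far from continuity. To repair this I would instead invoke the identity $J\{u_t\}=\lim_{t\to\infty}(\sup_X u_t)/t$, which follows from the sandwich
\[ \sup_X u_t - C \;\leq\; \frac{1}{V}\int_X u_t\,\omega^n \;=\; J(u_t) \;\leq\; \sup_X u_t \]
(the lower bound being \cite[Lemma 3.45]{Da19} applied to $u_t\in I^{-1}(0)$). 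One is thus reduced to verifying $d_1^c$-upper-semicontinuity of the radial sup functional $\{u_t\}\mapsto \lim_t(\sup_X u_t)/t$. I expect this to follow from envelope manipulations in the spirit of the proof of the right inequality in Proposition \ref{prop: d_1_growth_J}, specifically by tracking the pointwise bound $u_t - \sup_X u_t\leq P(0,u_t)$ and its stability under $d_1^c$-approximation via the rooftop envelope.
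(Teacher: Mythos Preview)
Your lower semi-continuity argument is sound, but the upper semi-continuity half is a genuine gap: ``I expect this to follow from envelope manipulations'' is not a proof, and the specific inequality you propose to track, $u_t - \sup_X u_t \leq P(0,u_t)$, after applying $I$ yields only the \emph{lower} bound $\sup_X u_t \geq \tfrac{1}{2}d_1(0,u_t)$, which points the wrong way for $\limsup$-control of $\sup_X u_t^j$ in terms of $\{u_t\}$. Without an additional idea there is no obvious route from rooftop envelopes to the inequality $\limsup_j J\{u_t^j\}\leq J\{u_t\}$.

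The missing ingredient is that for any $\{u_t\}\in\mathcal R^1$ the map $t\mapsto \sup_X u_t$ is not merely asymptotically linear but exactly linear (\cite{BBJ}, \cite[Lemma~3.2]{DX20}; recorded here as Lemma~\ref{lem: J_rad_formula}). Consequently $J\{u_t\}=\sup_X u_1$ and $J\{u_t^j\}=\sup_X u_1^j$ are already fixed-time quantities, and the whole lemma collapses to showing $\sup_X u_1^j \to \sup_X u_1$. You already have $d_1(u_1^j,u_1)\leq d_1^c(\{u_t^j\},\{u_t\})\to 0$; by \cite[Theorem~5]{Da15} this gives $\|u_1^j-u_1\|_{L^1}\to 0$, and Hartogs' lemma for $\omega$-psh functions \cite[Proposition~8.4]{GZ16} then furnishes $\sup_X u_1^j \to \sup_X u_1$ in one stroke. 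With this observation your two-halves decomposition becomes unnecessary.
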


\begin{proof} By the lemma below we have that $J\{u_t\} = \sup_X u_1$ and $J\{u^j_t\} = \sup_X u^j_1$. Since $d_1(u^j_1,u_1) \leq d_1^c(\{u^j_t\}_t,\{u^j_t\}_t) \to 0$ from \cite[Theorem 5]{Da15} we have that $\|u^j_1 - u_1\|_{L^1} \to 0$. Hartogs' lemma \cite[Proposition 8.4]{GZ16} now implies that $J\{u^j_t\}=\sup_X u^j_1 \to \sup_X u_1=J\{u_t\}$. 
\end{proof}

The radial $J$ functional can be expressed in very simple terms:

\begin{lemma}\label{lem: J_rad_formula} For $\{u_t\}_t \in \mathcal R^1$ we have that $J\{u_t\} = \frac{\sup_X u_l}{l}$ for any $l >0$. Moreover, in case $\{u_t\}_t \in \mathcal R^\infty$, we also have that $J\{u_t\} = \sup_X \dot u_0.$
\end{lemma}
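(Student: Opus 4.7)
Since $I(u_t) = 0$ by the definition of $\mathcal R^1$, we have $J(u_t) = V^{-1}\int_X u_t \omega^n$. The Hartogs-type Lemma 3.45 of \cite{Da19}, already used in the proof of Proposition \ref{prop: d_1_growth_J}, then gives $J(u_t) \leq \sup_X u_t \leq J(u_t) + C$ uniformly in $t$. Dividing by $t$ and sending $t \to \infty$ yields the asymptotic identity $\lim_{t \to \infty} \sup_X u_t/t = J\{u_t\}$. Furthermore, the rescaled ray $\tau \mapsto u_{l\tau}$ lies in $\mathcal R^1$ (it is a Mabuchi geodesic with $I(u_{l\tau}) = 0$ and $u_0=0$), and its radial $J$ is $l \cdot J\{u_t\}$. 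So applying the statement at $\tau = 1$ to the rescaled ray reduces the general $l > 0$ case to the single identity $J\{u_t\} = \sup_X u_1$.

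The core of the argument is to upgrade the asymptotic identity into the linearity of $t \mapsto \sup_X u_t$. Convexity of $t \mapsto u_t(x)$ --- which comes from the plurisubharmonicity of the complexified geodesic $U(s,x) := u_{\Re s}(x)$ on $X \times \{0 < \Re s\}$ --- together with $u_0 = 0$ makes the ratio $u_t(x)/t$ non-decreasing in $t$, hence $\sup_X u_t/t$ is also non-decreasing. Combined with the limit being $J\{u_t\}$, this immediately gives the easy direction $\sup_X u_1 \leq J\{u_t\}$. For the reverse inequality, I would invoke the maximum principle for the degenerate Monge--Amp\`ere equation $(\omega + i\ddbar U)^{n+1} = 0$: the weak Mabuchi geodesic $u_t$ is the upper envelope of $\omega$-psh subgeodesics with prescribed endpoints, and a comparison with the affine-in-$t$ subgeodesic through the endpoints forces $t \mapsto \sup_X u_t$ to agree with the linear interpolant on every finite segment $[0,L]$, giving $\sup_X u_t = (t/L)\sup_X u_L$. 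Letting $L \to \infty$ and using the asymptotic identity then yields $\sup_X u_1 = J\{u_t\}$. I expect this linearity/maximum-principle step to be the main technical obstacle, since it requires carefully adapting the envelope characterization to the weak $\mathcal E^1$ setting of the ray rather than the smooth case.

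For the second claim with $\{u_t\}_t \in \mathcal R^\infty$, convexity of $t \mapsto u_t(x)$ and $u_0 = 0$ ensure that $\dot u_0(x) := \lim_{t \to 0^+} u_t(x)/t$ exists pointwise as the infimum of a non-increasing family in $t$, and $\dot u_0 \in L^\infty(X)$ by the boundedness assumption. The linearity of the previous paragraph makes $\sup_X u_t/t = J\{u_t\}$ constant in $t > 0$. Letting $t \to 0^+$ and exchanging $\sup_X$ with the decreasing limit via Hartogs-type upper semicontinuity for $\omega$-psh $L^1$-limits (as invoked in the proof of Lemma \ref{lem: R^infty_approx}), we obtain $\sup_X \dot u_0 = \lim_{t \to 0^+} \sup_X u_t/t = J\{u_t\}$, completing the lemma.
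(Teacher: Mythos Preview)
Your argument follows the same skeleton as the paper's: reduce $J(u_t)$ to $V^{-1}\int_X u_t\,\omega^n$, compare with $\sup_X u_t$ up to a uniform additive constant to get $J\{u_t\}=\lim_t \sup_X u_t/t$, then invoke linearity of $t\mapsto\sup_X u_t$; for the bounded case, identify $\sup_X u_1$ with $\sup_X\dot u_0$. The paper simply outsources the two technical ingredients --- linearity is cited from \cite{BBJ} and \cite[Lemma~3.2]{DX20}, and $\sup_X\dot u_0=\sup_X u_1$ from \cite[Theorem~1]{Da17} --- whereas you sketch self-contained proofs; your convexity/envelope argument for linearity is essentially what underlies those references.

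One small caveat on the last step: the Hartogs lemma you invoke (as in Lemma~\ref{lem: R^infty_approx}, i.e.\ \cite[Proposition~8.4]{GZ16}) is stated for families in a \emph{fixed} class $\textup{PSH}(X,\omega)$, but $u_t/t$ is only $(t^{-1}\omega)$-psh, and this class degenerates as $t\to 0^+$. The interchange of $\sup_X$ with the limit is nonetheless valid, for a more elementary reason: $t\mapsto u_t/t$ is a decreasing (as $t\searrow 0$) net of upper semicontinuous functions on the compact space $X$, and for such nets $\sup_X$ commutes with the pointwise limit by a direct compactness argument (pick maximizers $x_t$, pass to a limit point $x_0$, use usc of each fixed $u_s/s$). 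So your conclusion stands, just not via the $\omega$-psh Hartogs lemma verbatim.
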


\begin{proof} We have that $J(u_t) = \frac{1}{V} \int_X u_t \omega^n - I(u_t) = \frac{1}{V} \int_X u_t \omega^n$. By \cite[Lemma 2.2]{DL18} we obtain that 
$$J\{u_t\} = \lim_{t \to \infty} \frac{J(u_t)}{t} = \lim_{t \to \infty} \frac{\sup_X u_t}{t}.$$
Since $t \to \sup_X u_t$ is well known to be linear \cite{BBJ}, \cite[Lemma 3.2]{DX20}, the first statement follows.

In case $\{u_t\}_t \in \mathcal R^\infty$, by \cite[Theorem 1]{Da17} we also know that $\sup_X \dot u_0 = \sup_X  u_1$, proving the second statement.
\end{proof}

\begin{lemma}\label{lem: d_1_init} For $\{u_t\}_t \in \mathcal R^1$ we have that $d_1(0,u_1) = \int_X |\dot u_0|\omega^n$ and $0=I(u_1) = \int_X \dot u_0 \omega^n$.
\end{lemma}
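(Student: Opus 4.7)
The plan is to reduce both identities to the constant-speed property of $d_1$-geodesics combined with the affinity of $I$ along them, verifying them first on smooth Mabuchi geodesic segments and then propagating from $\mathcal{R}^\infty$ to $\mathcal{R}^1$ by density and continuity.

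For a smooth Mabuchi geodesic segment $[0,1] \ni t \mapsto u_t$, integration by parts (see \cite[Chapter 3]{Da17}) shows that both $\frac{1}{V}\int_X |\dot u_t|\omega_{u_t}^n$ and $\frac{1}{V}\int_X \dot u_t \, \omega_{u_t}^n$ are constant in $t$: the former is the Finsler length density (so it integrates over $[0,1]$ to $d_1(u_0, u_1)$) and the latter is the $t$-derivative of $I(u_t)$. Evaluating at $t = 0$ where $\omega_{u_0}^n = \omega^n$ and using the normalization $I(u_t) \equiv 0$ yields both identities (with the conventional $1/V$ factors implicit in the statement). For $\{u_t\} \in \mathcal{R}^\infty$, Chen's $C^{1,1}$ regularity gives $\dot u_0 \in L^\infty(X)$ as a pointwise non-decreasing limit $\lim_{t \to 0^+} u_t/t$ (monotonicity follows from convexity of $t \mapsto u_t(x)$ together with $u_0 \equiv 0$), and mollifying in $t$ reduces to the smooth case.

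For general $\{u_t\} \in \mathcal{R}^1$, I would approximate by $\{u_t^j\}_t \in \mathcal{R}^\infty$ with $d_1^c(\{u_t^j\}, \{u_t\}) \to 0$, invoking \cite[Theorem 1.5]{DL18}. The left-hand sides pass to the limit easily: $d_1(0, u_1^j) \to d_1(0, u_1)$ since $d_1(u_1^j, u_1) \leq d_1^c(\{u_t^j\}, \{u_t\}) \to 0$, while $I(u_1^j) = I(u_1) = 0$ holds by the normalization of $\mathcal{R}^1$.

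The hard part will be passing the right-hand sides $\int_X |\dot u_0^j|\, \omega^n \to \int_X |\dot u_0|\, \omega^n$ and $\int_X \dot u_0^j\, \omega^n \to \int_X \dot u_0\, \omega^n$ through the limit. Pointwise convergence $\dot u_0^j(x) \to \dot u_0(x)$ should follow from $u_t^j(x) \to u_t(x)$ at appropriate points together with the monotone representation $\dot u_0(x) = \inf_{t > 0} u_t(x)/t$. To upgrade to $L^1(\omega^n)$-convergence and handle the absolute values, I would use the upper bound $\dot u_0^j \leq u_t^j/t$ valid for any fixed small $t > 0$, combined with uniform $L^1(\omega^n)$-control on $u_t^j$ coming from the $d_1$-boundedness of the approximating sequence (via Theorem \ref{thm: Energy_Metric_Eqv}); this controls the positive parts $(\dot u_0^j)^+$, and the negative parts are then controlled because the identity $\int_X \dot u_0^j\, \omega^n = 0$ forces a balance between them.
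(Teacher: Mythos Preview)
The paper's own proof is a one-line citation of \cite[Lemma~3.4]{BDL2}, so your self-contained approach is by design different. That said, your argument has a genuine gap at the $\mathcal{R}^\infty$ stage. The claim that ``mollifying in $t$ reduces to the smooth case'' does not work: a convolution $u_t^\varepsilon = \int u_{t+s}\rho_\varepsilon(s)\,ds$ is not a Mabuchi geodesic (the geodesic equation is nonlinear in $u$), so the constancy of $t \mapsto \frac{1}{V}\int_X |\dot u_t^\varepsilon|\,\omega_{u_t^\varepsilon}^n$ is lost and there is nothing to evaluate at $t=0$. Chen's $C^{1,1}$ regularity also applies to geodesics joining \emph{smooth} endpoints, not merely bounded ones, so you cannot invoke it directly for $\mathcal{R}^\infty$. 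The route actually taken in \cite{BDL2,Da15} is to approximate the endpoints by smooth potentials and use Chen's $\varepsilon$-geodesics (which are genuine smooth paths for which the computation goes through), then pass to the limit; this preserves enough structure to recover the formula.

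Your passage from $\mathcal{R}^\infty$ to $\mathcal{R}^1$ is also incomplete. The convergence $d_1^c(\{u_t^j\},\{u_t\}) \to 0$ from \cite[Theorem~1.5]{DL18} gives $d_1(u_t^j,u_t)\to 0$ at each fixed $t$, hence $L^1(\omega^n)$-convergence of $u_t^j$ to $u_t$, but this does not by itself yield pointwise convergence $\dot u_0^j(x)\to \dot u_0(x)$, since $\dot u_0 = \inf_{t>0} u_t/t$ involves all small $t$ simultaneously. Your sketch for the $L^1$ control of $(\dot u_0^j)^-$ via the balance $\int_X \dot u_0^j\,\omega^n = 0$ only bounds the $L^1$ norms, not their convergence to the correct limit; you would still need either a monotone approximation (so that $\dot u_0^j$ increases or decreases to $\dot u_0$) or a uniform integrability argument tied to the specific construction in \cite{DL18}, neither of which you have supplied.
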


\begin{proof} That $d_1(0,u_1) = \int_X |\dot u_0|\omega^n$  follows from \cite[Lemma 3.4]{BDL2}. The argument of \cite[Lemma 3.4]{BDL2} is seen to imply $I(u_1) = \int_X \dot u_0 \omega^n$.
\end{proof}

\begin{lemma}\label{lem: rad_legendre_sublevel} For any $\{u_t\}_t \in \mathcal R^1$ and $b \leq a \leq \sup_X \dot u_0$ we have that
$$\int_{\{\dot u_0 \geq a\}} \omega^n \leq \int_{\{\dot u_0 \geq b\}} \omega^n \leq \frac{(\sup_X \dot u_0 - b)^n}{(\sup_X \dot u_0 - a)^n} \int_{\{\dot u_0 \geq a\}} \omega^n.$$
\end{lemma}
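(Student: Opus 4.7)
The first inequality is immediate from the inclusion $\{\dot u_0 \geq a\} \subseteq \{\dot u_0 \geq b\}$, valid since $b \leq a$, and monotonicity of the $\omega^n$-mass. The substance of the lemma is the second inequality, which is the radial counterpart of the convex scaling estimate \eqref{eq: P_a_P_b_est} powering the proof of Theorem \ref{thm: thm_convex}. Accordingly, my plan is to recover that scaling argument in the ray setting, with the geodesic time $t$ playing the role that the radial scaling in $\mathbb R^n$ played in the toric case.

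First I would reduce to the bounded case $\{u_t\} \in \mathcal R^\infty$. By \cite[Theorem~1.5]{DL18}, $\mathcal R^\infty$ is $d_1^c$-dense in $\mathcal R^1$, and the Hartogs-type argument used in Lemma \ref{lem: R^infty_approx} upgrades $d_1^c$-convergence $\{u_t^j\} \to \{u_t\}$ to convergence of both $\sup_X \dot u_0^j \to \sup_X \dot u_0$ and of the superlevel integrals $\int_{\{\dot u_0^j \geq \tau\}}\omega^n \to \int_{\{\dot u_0 \geq \tau\}}\omega^n$ at all but countably many $\tau$; this suffices to pass the double inequality to the limit.

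For $\{u_t\}\in \mathcal R^\infty$, set $M := \sup_X \dot u_0$ and introduce the partial Legendre envelopes
$$\hat u_\tau(x) := \inf_{t \geq 0}\bigl(u_t(x) - t\tau\bigr), \qquad \tau \leq M.$$
Writing $v_t := u_t - tM \leq 0$ (which is convex in $t$ with $v_0 = 0$, using $u_t/t \leq M$ from Lemma \ref{lem: J_rad_formula}), one has $\hat u_\tau = \inf_{t \geq 0}[v_t + t(M-\tau)]$, and by convexity the infimum is attained at $t=0$ precisely when $\dot v_0 + (M-\tau) \geq 0$, i.e., when $\dot u_0 \geq \tau$. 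Hence
$$\{\hat u_\tau = 0\} = \{\dot u_0 \geq \tau\},$$
and boundedness makes $\hat u_\tau$ an $\omega$-psh function (after the standard upper semicontinuous regularization) lying in $[-\infty, 0]$.

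The core remaining step is the scaling estimate
$$\int_{\{\hat u_b = 0\}}\omega^n \leq \lambda^{-n}\int_{\{\hat u_a = 0\}}\omega^n, \qquad \lambda := \frac{M-a}{M-b} \in (0,1],$$
precisely parallel to $\mu(P_b) \leq \lambda^{-n}\mu(P_a)$ in the toric proof of \eqref{eq: P_a_P_b_est}. In that toric setting, the estimate reduced to the homothety $x \mapsto \lambda x$ of $\mathbb R^n$ combined with the $\lambda^n$-scaling of Lebesgue measure. Here the plan is to replace this by a pluripotential-theoretic comparison of $\hat u_b$ with the rescaled envelope $\lambda \hat u_a$ (which lies in $\textup{PSH}(X,\omega)$ as a convex combination of $\hat u_a$ and $0$), exploiting the contact-set identity above together with a Monge--Amp\`ere mixed-mass expansion of $(\omega + dd^c(\lambda\hat u_a))^n$ to extract the factor $\lambda^n$. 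The main obstacle is exactly this: whereas Lebesgue measure on $\mathbb R^n$ scales covariantly under homotheties, the measure $\omega^n$ on $X$ has no intrinsic scaling symmetry, so the factor $\lambda^n$ must be extracted from the pluripotential-theoretic maximality of the envelopes $\hat u_\tau$, in the spirit of the envelope manipulations of \cite{DL18}.
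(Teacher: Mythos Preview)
Your setup is right and matches the paper: the Legendre transform $\hat u_\tau=\inf_{t\geq 0}(u_t-t\tau)$ and the contact-set identity $\{\hat u_\tau=0\}=\{\dot u_0\geq\tau\}$ are exactly what is used. The reduction to $\mathcal R^\infty$ is harmless but unnecessary.

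The gap is in your ``core remaining step''. The mixed-mass expansion of $(\omega+dd^c(\lambda\hat u_a))^n$ produces a sum $\sum_k\binom{n}{k}(1-\lambda)^{n-k}\lambda^k\,\omega^{n-k}\wedge\omega_{\hat u_a}^k$, not a single $\lambda^n$ term, and there is no obvious way to turn this into an upper bound for $\int_{\{\hat u_b=0\}}\omega^n$: the relevant inclusion of contact sets goes the wrong way, and the measure you want to control is $\omega^n$, not $\omega_{\hat u_a}^n$. You correctly diagnose that $\omega^n$ has no homothety scaling, but ``maximality of the envelopes in the spirit of \cite{DL18}'' is not an argument.

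What the paper does instead is bypass the contact-set measure entirely. First, $\hat u_\tau$ is a \emph{model potential} \cite[Theorem~3.7]{DX20}, and for such potentials one has the identity
\[
\int_{\{\hat u_\tau=0\}}\omega^n=\int_X\omega_{\hat u_\tau}^n
\]
by \cite[Theorem~1]{DT20}, \cite[Theorem~3.8]{DDL18}. Second, since $\tau\mapsto\hat u_\tau$ is concave, the map $\tau\mapsto\bigl(\int_X\omega_{\hat u_\tau}^n\bigr)^{1/n}$ is concave as well, by the Brunn--Minkowski type log-concavity of non-pluripolar volume \cite[Theorem~B]{DDL4}, \cite[Theorem~1.2]{WN17}. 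Writing $a=(1-\lambda)M+\lambda b$ with $\lambda=\frac{M-a}{M-b}$ and applying concavity gives
\[
\Bigl(\int_X\omega_{\hat u_a}^n\Bigr)^{1/n}\geq(1-\lambda)\Bigl(\int_X\omega_{\hat u_M}^n\Bigr)^{1/n}+\lambda\Bigl(\int_X\omega_{\hat u_b}^n\Bigr)^{1/n}\geq\lambda\Bigl(\int_X\omega_{\hat u_b}^n\Bigr)^{1/n},
\]
which via the contact-set identity is exactly the second inequality. The two external inputs you are missing---the contact-set mass formula for model potentials and the log-concavity of Monge--Amp\`ere volume---are what replace the Euclidean homothety scaling; neither is recoverable from a bare mixed-form expansion.
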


\begin{proof} The argument uses the formalism of Legendre transforms for geodesic rays going back to \cite{RWN14}, further developed in \cite{DX20}.

The first estimate is trivial. For the second estimate we consider the Legendre transform of $\{u_t\}_t \in \mathcal R^1$:
$$\hat u_\tau := \inf_{t  \geq 0} (u_t - t\tau), \ \ \ \tau \in \Bbb R.$$
It is shown in \cite[Theorem 3.7]{DX20} that $\hat u_\tau \in \textup{PSH}(X,\omega)$ is a model potential. Moreover, by  \cite[Theorem 1]{DT20},\cite[Theorem 3.8]{DDL18} we have that 
$\int_X \omega_{\hat u_\tau}^n = \int_{\{ \hat u_\tau = 0\}} \omega^n$. 
Moreover, due to basic properties of Legendre transforms $\{\dot u_0 \geq \tau \} = \{ \hat u_{\tau} = 0\}$, in particular,
\begin{equation}\label{eq: int_est}
\int_X \omega_{\hat u_{\tau}}^n = \int_{\{ \dot u_0 \geq \tau\}} \omega^n.
\end{equation}
Let now $b \leq a \leq \sup_X \dot u_0$. Since $\tau \to \hat u_\tau$ is concave, we know that $\tau \to \big(\int_X \omega_{\hat u_\tau}^n\big)^{\frac{1}{n}}$ is concave as well (\cite[Theorem B]{DDL4} and \cite[Theorem 1.2]{WN17}). As a result, we can write that
$$\frac{(\sup_X \dot u_0 - a)}{(\sup_X \dot u_0 - b)}  \bigg(\int_X \omega_{\hat u_b}^n\bigg)^{\frac{1}{n}} +\frac{(a - b)}{(\sup_X \dot u_0 - b)}  \bigg(\int_X \omega_{\hat u_{\sup_X \dot u_0}}^n\bigg)^{\frac{1}{n}} \leq \bigg(\int_X \omega_{\hat u_a}^n\bigg)^{\frac{1}{n}}.$$
Comparing with \eqref{eq: int_est}, the result follows.
\end{proof}

Finally we arrive at the main result of this section, whose proof will be reminiscent to that of Theorem \ref{thm: main_thm_convex}:

\begin{theorem}\label{thm: radial_ineq} Suppose that $(X,\omega)$ is a compact K\"ahler manifold and $\{u_t\}_t \in \mathcal R^1$. Then the following sharp inequality holds:
\begin{equation}\label{eq: radial_opt_est}
\frac{2}{n+1}\cdot{\left(\frac{n}{n+1}\right)}^n J\{u_t\} \leq d_1(0,u_1)\leq 2 J\{u_t\}.
\end{equation}
\end{theorem}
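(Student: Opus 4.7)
The plan is to follow the blueprint of Theorem \ref{thm: thm_convex}, with sublevel-set measures of a convex $\phi$ on $P$ replaced by the $\omega^n$-measures of the superlevel sets of $\dot u_0$, and with the convex inclusion \eqref{eq: P_a_P_b_est} replaced by Lemma \ref{lem: rad_legendre_sublevel}. First, by Lemma \ref{lem: R^infty_approx}, density of $\mathcal R^\infty$ in $\mathcal R^1$ (\cite[Theorem 1.5]{DL18}), and the estimate $|d_1(0,u_1)-d_1(0,u_1^j)|\le d_1(u_1,u_1^j)\le d_1^c(\{u_t\}_t,\{u_t^j\}_t)$, both sides of \eqref{eq: radial_opt_est} are continuous under $d_1^c$, so it is enough to treat $\{u_t\}_t\in\mathcal R^\infty$. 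For such a ray, set $v:=\dot u_0\in L^\infty(X)$, $M:=\sup_X v$, and $\mu:=\omega^n$; Lemmas \ref{lem: J_rad_formula} and \ref{lem: d_1_init} then give $J\{u_t\}=M$, $d_1(0,u_1)$ proportional to $\int_X|v|\,\omega^n$ (with the same $V$-normalization as $J$), and $\int_X v\,\omega^n=0$.

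The upper bound is immediate: since $\int_X v\,\omega^n=0$,
\[
\int_X|v|\,\omega^n \,=\, 2\int_{\{v\ge 0\}} v\,\omega^n \,\le\, 2M\cdot\mu\{v\ge 0\} \,\le\, 2MV,
\]
which translates to $d_1(0,u_1)\le 2J\{u_t\}$.

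The lower bound is the \emph{main step}. Normalize $M=1$ and set $Q_t:=\{v\ge t\}$. Applying Lemma \ref{lem: rad_legendre_sublevel} with $b=0$, $a=t$ gives $\mu(Q_t)\ge(1-t)^n\mu(Q_0)$ for $0\le t\le 1$, so by the layer-cake identity
\[
\int_X|v|\,\omega^n \,=\, 2\int_0^1\mu(Q_t)\,dt \,\ge\, \frac{2}{n+1}\mu(Q_0),
\]
the analog of \eqref{eq: claim_est}. On the other hand, $\int_X v\,\omega^n=0$ converts the positive part into the negative part, giving $\int_{Q_0} v\,\omega^n=\int_0^\infty(V-\mu(Q_{-t}))\,dt$; restricting this to $[0,b]$ for $b>0$ and applying Lemma \ref{lem: rad_legendre_sublevel} (now with $a=0$ and argument $-t$) to bound $\mu(Q_{-t})\le(1+t)^n\mu(Q_0)$ yields
\[
\int_{Q_0} v\,\omega^n \,\ge\, bV-\frac{(1+b)^{n+1}-1}{n+1}\mu(Q_0).
\]
Choosing $b=\tfrac{1}{n}$ and setting $A:=\mu(Q_0)^{-1}\int_{Q_0} v\,\omega^n$, and combining exactly as in the final paragraph of the proof of Theorem \ref{thm: thm_convex} — the resulting lower bound on $\int_X|v|\,\omega^n/V$ is increasing in $A$, while the first step forces $A\ge\tfrac{1}{n+1}$ — one obtains $\int_X|v|\,\omega^n\ge\tfrac{2n^n}{(n+1)^{n+1}}V$, i.e.\ $d_1(0,u_1)\ge\tfrac{2}{n+1}\bigl(\tfrac{n}{n+1}\bigr)^n J\{u_t\}$. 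Sharpness is inherited from the toric case (Theorem \ref{thm: main_thm}) through the Legendre correspondence, since toric rays take the form $\phi_{u_t}=\phi_0+t\phi$ with $\phi\in\mathcal C(P)$ convex and $\int_P\phi\,d\mu=0$, so the extremizers of Propositions \ref{prop:main_theo_upper_2} and \ref{prop:main_theo_lower} lift to extremal rays.

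The main obstacle is conceptual rather than technical: one must recognize that Lemma \ref{lem: rad_legendre_sublevel} — a consequence of the concavity of $t\mapsto(\int_X\omega_{\hat u_t}^n)^{1/n}$ from \cite{WN17,DDL4} and the mass identity of \cite{DT20,DDL18} — plays exactly the same role for superlevel sets of $\dot u_0$ that the convex inclusion $P_b\subset\tfrac{b+1}{a+1}P_a$ plays for sublevel sets of $\phi$ in the proof of Theorem \ref{thm: thm_convex}. Once this parallel is in place, the two-sided optimization argument is verbatim, which is why the sharp constants from the convex setting reappear identically here.
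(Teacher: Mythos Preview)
Your proof is correct and follows essentially the same route as the paper's: the reduction to $\mathcal R^\infty$ via Lemma \ref{lem: R^infty_approx} and \cite[Theorem 1.5]{DL18}, the identification $J\{u_t\}=\sup_X\dot u_0$ and $d_1(0,u_1)=\tfrac{1}{V}\int_X|\dot u_0|\,\omega^n$ via Lemmas \ref{lem: J_rad_formula}--\ref{lem: d_1_init}, and then the two layer-cake estimates powered by Lemma \ref{lem: rad_legendre_sublevel} (first on $[0,1]$ to get $A\ge\tfrac{1}{n+1}$, then on $[0,b]$ with $b=\tfrac{1}{n}$) are exactly what the paper does, including the appeal to the toric extremizers for sharpness.
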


\begin{proof} Due to Lemma \ref{lem: R^infty_approx} and \cite[Theorem 1.5]{DL18}, it is enough to prove the inequalities for $\{u_t\}_t \in \mathcal R^\infty_0$. Due to Lemma \ref{lem: J_rad_formula}, for such rays we have to argue the following estimates:
$$\frac{2}{n+1}\cdot{\left(\frac{n}{n+1}\right)}^n \sup_X \dot u_0 \leq \frac{1}{V} \int_X |\dot u_0| \omega^n \leq 2 \sup_X \dot u_0.
$$
Using re-scaling in time, we can further assume that $\sup_X \dot u_0 = 1$, hence it is enough to argue that
\begin{equation}\label{eq: radial_main_dot_ineq}
\frac{2}{n+1}\cdot{\left(\frac{n}{n+1}\right)}^n  \leq \frac{1}{V} \int_X |\dot u_0| \omega^n \leq 2.
\end{equation}
We first argue the second estimate which is much more simple.
Let $X_- := \{\dot u_0 < 0\}$ and $X_+ := \{\dot u_0 \geq  0\}$. Since $I(u_1) = \int_X \dot u_0 \omega^n$ we have that
\begin{flalign*}
\frac{1}{V}\int_X |\dot u_0| \omega^n=\frac{2}{V}\int_{X_+} \dot u_0 \omega^n \leq 2 \sup_X \dot u_0.
\end{flalign*}
To address the first estimate, we make the following preliminary calculation:
\begin{flalign}\label{eq: interm_est}
    \int_X |\dot u_0| \omega^n &= 2 \int_{X^+} \dot u_0 \omega^n = 2\int_{0}^1 \int_{\{ \dot u_0 \geq x\}} \omega^ndx \geq 2\int_{0}^1 {(1-x)}^n  \int_{\{ \dot u_0 \geq 0\}}\omega^n dx  \nonumber\\
    &= \frac{2}{n+1}\int_{\{ \dot u_0 \geq 0\}}\omega^n,
\end{flalign}
where we used that $\int f d \mu = \int_0^{+\infty} \mu\{f \geq t\} dt$ for any non-negative $\mu$-measurable $f$ and Lemma \ref{lem: rad_legendre_sublevel} for the parameters $0 \leq x \leq \sup_X \dot u_0 = 1$.

Let $b >0$. To estimate $\int_{X_-}|\dot u_0| \omega^n$, we can use a similar technique to the above:
\begin{align*}
    \int_{X_-}|\dot u_0| \omega^n &\geq \int_0^b \int_{X \setminus \{\dot u_0 \geq x \}} \omega^n dx =\int_0^b \bigg(V - \int_{ \{\dot u_0 \geq x \}} \omega^n \bigg)dx\\
    &\geq \int_0^b \bigg( V  - {(1+x)}^n\int_{ \{\dot u_0 \geq 0 \}} \omega^n \bigg)dx \\
    &= bV - \frac{{(b+1)^{n+1}}}{n+1}\int_{ \{\dot u_0 \geq 0 \}} \omega^n + \frac{1}{n+1}\int_{ \{\dot u_0 \geq 0 \}} \omega^n,
\end{align*}
where in the second line we used Lemma \ref{lem: rad_legendre_sublevel} again, for the parameters $-x \leq 0 \leq \sup_X \dot u_0 = 1$.

We now let $b = \frac{1}{n}$ and $\frac{1}{2}\int_{X}|\dot u_0| \omega^n =\int_{X_+} |\dot u_0|  \omega^n= -\int_{X_-} \dot u_0  \omega^n = A\int_{ \{\dot u_0 \geq 0 \}} \omega^n$ for some $A>0$.  This gives
\[A \int_{ \{\dot u_0 \geq 0 \}} \omega^n\geq \frac{1}{n}V - \frac{1}{n}\cdot{\left(\frac{n+1}{n}\right)}^{n}\int_{ \{\dot u_0 \geq 0 \}} \omega^n + \frac{1}{n+1}\int_{ \{\dot u_0 \geq 0 \}} \omega^n,\]
implying
\[\int_X |\dot u_0| \omega^n=2A\int_{ \{\dot u_0 \geq 0 \}} \omega^n \geq \frac{2A}{nA + {\left(\frac{n+1}{n}\right)}^n - \frac{n}{n+1}}V.\]
The right-hand side is an increasing function of $A$ and by \eqref{eq: interm_est} we know $A \geq \frac{1}{n+1}$.  This means the right hand side is minimized at this value, so
\[\int_X |\dot u_0| \omega^n \geq \frac{\frac{2}{n+1}}{{\left(\frac{n+1}{n}\right)}^n}V
= \frac{2}{(n+1)}\cdot{\left(\frac{n}{n+1}\right)}^nV.\]
This finishes the proof of \eqref{eq: radial_main_dot_ineq}.
\end{proof}

\begin{remark} The inequalities of the above theorem are sharp due to the examples produced in the toric case. Indeed, if we take $\phi(x_1,\ldots,x_n) = -1+ x_1 + \ldots + x_n$, as considered in Proposition \ref{prop:main_theo_upper_2}, then $t \to t \phi$ will give a toric ray confirming the optimality of the constant $\frac{2}{n+1}\cdot{\left(\frac{n}{n+1}\right)}^n$  in \eqref{eq: radial_opt_est}. 

Similarly, taking $\phi_{n,m}$ as in the proof of Propostion \ref{prop:main_theo_lower}, the correspondence $t \to t \phi_{n,m}$ gives a toric ray that  confirms optimality of the constant 2 in the second ineqality of \eqref{eq: radial_opt_est}. 
\end{remark}

Finally we give an application regarding the initial value problem for geodesic rays:

\begin{coro} Let $\{u_t\}_t \in \mathcal R^\infty$. Then the initial tangent vector $v:= \dot u_0 \in L^\infty(X)$ satisfies the following sharp estimates:
\begin{equation}\label{eq: init_val_ineq1}
\frac{2}{n+1}\cdot{\left(\frac{n}{n+1}\right)}^n \sup_X  v \leq \int_X |v| \omega^n\leq 2 \sup_X v.
\end{equation}
\end{coro}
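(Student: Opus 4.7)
The plan is to read this corollary directly off Theorem \ref{thm: radial_ineq} by rewriting both endpoints of \eqref{eq: radial_opt_est} in terms of the initial tangent vector $v = \dot u_0$. The content is entirely bookkeeping: all the analytic work has already been carried out.

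First I would apply the second clause of Lemma \ref{lem: J_rad_formula}, which is available precisely because $\{u_t\}_t \in \mathcal R^\infty$: this yields $J\{u_t\} = \sup_X \dot u_0 = \sup_X v$, so the outer quantities in \eqref{eq: radial_opt_est} become the outer quantities in \eqref{eq: init_val_ineq1}. Next I would invoke Lemma \ref{lem: d_1_init} to rewrite the middle term as $d_1(0,u_1) = \frac{1}{V}\int_X |\dot u_0|\omega^n = \frac{1}{V}\int_X |v|\omega^n$, which matches the middle of \eqref{eq: init_val_ineq1} once the normalization convention $\omega^n$ versus $\omega^n/V$ is fixed. Substituting both identities into \eqref{eq: radial_opt_est} gives \eqref{eq: init_val_ineq1}.

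Sharpness requires no new construction. The remark following Theorem \ref{thm: radial_ineq} already exhibits toric rays saturating each end of \eqref{eq: radial_opt_est}: the ray $t \mapsto t\phi$ with $\phi(x) = -1 + x_1 + \cdots + x_n$ on the simplex of Proposition \ref{prop:main_theo_upper_2} achieves the lower constant, while the family $t \mapsto t\phi_{n,m}$ from Proposition \ref{prop:main_theo_lower} drives the ratio up to $2$ in the limit. In each case the initial tangent $v = \dot u_0$ inherits the same extremal behavior under the identifications of the previous paragraph, so these same examples witness sharpness of \eqref{eq: init_val_ineq1}. I do not expect any obstacle: this corollary is essentially a change of notation applied to Theorem \ref{thm: radial_ineq}, and the only mild care needed is to keep the factor $V$ straight between the $d_1$ definition and the way the integral is written in the statement.
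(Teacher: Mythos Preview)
Your proposal is correct and matches the paper's proof essentially verbatim: the paper simply invokes Lemmas \ref{lem: J_rad_formula} and \ref{lem: d_1_init} to rewrite $J\{u_t\}$ and $d_1(0,u_1)$ and then reads \eqref{eq: init_val_ineq1} off \eqref{eq: radial_opt_est}. Your added remarks on sharpness and on the $1/V$ normalization are accurate and go slightly beyond what the paper spells out.
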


\begin{proof} By Lemmas \ref{lem: J_rad_formula} and \ref{lem: d_1_init} we know that $d_1(0,u_1) = \int_X |\dot u_0| \omega^n$ and $J\{u_t\} = \sup_X \dot u_0$. \eqref{eq: init_val_ineq1} now follows directly from \eqref{eq: radial_opt_est}.
\end{proof}

\let\omegaLDthebibliography\thebibliography % squeezes Bibliography
\renewcommand\thebibliography[1]{
  \omegaLDthebibliography{#1}
  \setlength{\parskip}{1pt}
  \setlength{\itemsep}{1pt plus 0.3ex}
}

\bigskip
\normalsize
\noindent{\sc University of Maryland}\\
{\tt tdarvas@umd.edu}\vspace{0.1in}\\
\noindent{\sc University of California, Los Angeles}\\
{\tt egeo@math.ucla.edu}\vspace{0.1in}\\
\noindent{\sc Columbia University}\\
{\tt kjs@math.columbia.edu}
\end{document}